\begin{document}



\newtheorem{theorem}{Theorem}[section]
\newtheorem{prop}[theorem]{Proposition}
\newtheorem{lemma}[theorem]{Lemma}
\newtheorem{definition}[theorem]{Definition}
\newtheorem{cor}[theorem]{Corollary}
\newtheorem{example}[theorem]{Example}
\newtheorem{remark}[theorem]{Remark}
\newtheorem{assumption}[theorem]{Assumption}
\newcommand{\ra}{\rightarrow}
\renewcommand{\theequation}
{\thesection.\arabic{equation}}
\newcommand{\ccc}{{\mathcal C}}
\newcommand{\one}{1\hspace{-4.5pt}1}

\def\HSL { H^1_{L,S}(X) }

\def \Gg {\widetilde{{\mathcal g}}_{L}}
\def \GG {{\mathcal g}_{L}}
\def \SL {\sqrt[m]{L}}
\def \sq {\sqrt}
\def \GL {{\mathcal G}_{\lambda,L}^{\ast}}

\def \l {\lambda}
\def \gL{{\widetilde {\mathcal G}}_{\lambda, L}^{\ast}}
\def \RN {\mathbb{R}^n}
\def\RR{\mathbb R}
\newcommand{\nf}{\infty}
 \def \Lips  {{   \Lambda}_{L}^{ \alpha,  s }(X)}
\def\BL {{\rm BMO}_{L}(X)}
\def\HAL { \mathbb{F}\dot{\mathbb{H}}_{L,at,M}^{\lambda}({\mathbb R}^n)}
\def\HML { \mathbb{F}\dot{\mathbb{H}}_{L,mol,M,\epsilon}^{\lambda}({\mathbb R}^n) }
\def\HM{ H^p_{L, {mol}, 1}(X) }
\def\Ma { {\mathcal M} }
\def\MM { {\mathcal M}^{ 2, \lambda, \epsilon}(L) }
\def\dMM { ({\mathcal M}^{2, \lambda,\epsilon}(L))^{\ast} }

\def\HSL { H^p_{L, S_h}(X) }
\newcommand\mcS{\mathcal{S}}
\newcommand\mcB{\mathcal{B}}
\newcommand\D{\mathcal{D}}
\newcommand\C{\mathbb{C}}
\newcommand\N{\mathbb{N}}
\newcommand\R{\mathbb{R}}
\newcommand\G{\mathbb{G}}
\newcommand\T{\mathbb{T}}
\newcommand\Z{\mathbb{Z}}
\allowdisplaybreaks

 \title[Preduals of quadratic Campanato spaces with heat kernel bounds]
 {Preduals of quadratic Campanato spaces associated to operators with heat kernel bounds}

\author[Liang Song, Jie Xiao \ and \ Xuefang Yan]{Liang Song,   \  Jie Xiao\ and\   Xuefang Yan}
\thanks{{\it {\rm 2010} Mathematics Subject Classification:}
 42B35,   47B38.}
\thanks{{\it Key words and phrases:} Quadratic Campanato space, self-adjoint operator, heat semigroup, Hausdorff capacity, Choquet integral, atom, molecule.
 }

\address{Liang Song, Department of Mathematics, Sun Yat-sen
University, Guangzhou, 510275, P.R. China} \email{songl@mail.sysu.edu.cn}

\address{Jie Xiao, Department of Mathematics and Statistics, Memorial University, St. John's NL,
A1C 5S7, Canada} \email{jxiao@mun.ca}

\address{Xuefang Yan, Department of Mathematics, Sun Yat-sen
University, Guangzhou, 510275, P.R. China \& College of mathematics and information
science, Heibei Normal University, Shijiazhuang, 050016, P.R. China}
 \email{yanxuefang2008@163.com; yanxuefang@mail.hebtu.edu.cn}

 \date{}

\bigskip

\begin{abstract}
Let $L$ be a nonnegative, self-adjoint operator on $L^2(\mathbb{R}^n)$ with the Gaussian upper bound on its heat kernel.
As a generalization of the square Campanato space $\mathcal{L}^{2,\lambda}_{-\Delta}(\mathbb R^n)$, in \cite{DXY} the quadratic Campanato space $\mathcal{L}_L^{2,\lambda}(\mathbb{R}^n)$ is defined by a variant of the maximal function associated with the semigroup $\{e^{-tL}\}_{t\geq 0}$. On the basis of \cite{DX} and \cite{YY} this paper addresses the preduality of $\mathcal{L}_L^{2,\lambda}(\mathbb{R}^n)$ through an induced atom (or molecular) decomposition. Even in the case $L=-\Delta$ the discovered predual result is new and natural.
\end{abstract}

\maketitle

\tableofcontents

\section{Introduction}

Given $1\leq p<\infty$ and $0<\lambda<n$. A locally integrable complex-valued function $f$ on $\mathbb{R}^n$ is said to belong to be in the Morrey space $L^{p,\lambda}(\mathbb{R}^n)$ provided
\begin{align*}
\big\|f\big\|_{L^{p,\lambda}}=\sup\limits_{B\subset \mathbb{R}^n}\left(r_B^{-\lambda}\int_B |f(x)|^p \ dx\right)^{1/p}<\infty,
\end{align*}
where $r_B$ is the radius of the ball $B$. Such a function space was introduced by C. B. Morrey in \cite{Mo} to treat the solutions of some quasi-linear elliptic PDEs. Since then, the theory of Morrey spaces has been developed extensively; see e.g. \cite{AX1, AX2, BRV, P, K, Ta} and the references therein.

To weaken the integral condition appeared in the Morrey space, in his 1963/4 papers \cite{Cam, Ca} S. Campanato utilized the modified mean oscillation to define the following function space:
$$
f\in \mathcal{L}^{p,\lambda}(\mathbb{R}^n)\Longleftrightarrow
\|f\|_{\mathcal{L}^{p,\lambda}}=\sup\limits_{B\subset \mathbb{R}^n}\left(r_B^{-\lambda}\int_B |f(x)-f_B|^p \ dx\right)^{1/p}<\infty,
$$
where $f_B:=|B|^{-1}\int_B f(y)\ dy$. It is easy to see that $L^{p,\lambda}(\mathbb{R}^n)$ is a proper subclass of $\mathcal{L}^{p,\lambda}(\mathbb{R}^n)$ since any complex constant is in $\mathcal L^{p,\lambda}(\mathbb{R}^n)\setminus L^{p,\lambda}(\mathbb{R}^n)$.
Interestingly, $\mathcal{L}^{p,\lambda}(\mathbb{R}^n)$ under $(p,\lambda)\in (1,\infty)\times (0,n)$ exists as a dual space - more precisely, if $Z^{q,\lambda}(\mathbb{R}^n)$ with $q=p/(p-1)$ stands for the Zorko space (cf.\cite{Z}) of all functions $f$
on $\mathbb{R}^n$ with the norm
$$
\big\|f\big\|_{Z^{q,\lambda}}=\inf\Big\{\|\{c_k\}\|_{l^1}: \ f=\sum_k c_ka_k\Big\}<+\infty,
$$
where $a_k$ is a $(q, \lambda)$-atom and $\|\{c_k\}\|_{l^1}<+\infty$, and the infimum is taken
over all possible functions $f=\sum_k c_ka_k$ whose $a_k$ is a $(q,\lambda)$-atom on $\mathbb R^n$:
\begin{itemize}
\item $a_k$ is supported on
a ball $B\subseteq \mathbb{R}^n$;

\item $\int a(x)\,dx=0$;

 \item $\|a\|_q\leq r_B^{-\lambda/p}\ \ \hbox{with}\ \ 1/q+1/{p}=1,$

\end{itemize}
then
$$
(Z^{q,\lambda}(\mathbb{R}^n))^*=\mathcal{L}^{p,\lambda}(\mathbb{R}^n),
$$
namely, the Zorko $Z^{q,\lambda}(\mathbb{R}^n)$ is identified with a predual of the Campanato space $\mathcal{L}^{p,\lambda}(\mathbb{R}^n)$.

 However, there are important situations in which the standard theory of function spaces is not applicable, including certain problems in the theory of partial differential operators generalizing the Laplacian. There is a need to consider the function spaces that are adapted to a linear operator $L$, similarly to the way that such function spaces as the above-defined Campanato spaces are adapted to the Laplacian. This topic has attracted a lot of attention, and has been a very active research topic in harmonic analysis, potential theory and PDEs; see, for instance, \cite{DZ,ADM,DY1,DY2,AMR,HM,HMM,HLMMY,DL,JY,SY}.

For our purpose, we will consider such a nonnegative self-adjoint operator $L$ on $L^2({{\mathbb R}^n})$
that the semigroup $e^{-tL}$, generated by $-L$ on $L^2({{\mathbb R}^n})$, has the kernel $p_t(x,y)$ obeying the Gaussian upper bound for a constant $C>0$:
\begin{equation}\label{e2.1}
|p_t(x,y)|\leq C  t^{-n/2}\exp\Big(-{|x-y|^2\over ct}\Big)\ \ \  {\rm for\ all} \ t>0\  \& \ {\rm for\ a.e.}\ (x,y)\in \mathbb R^n\times{{\mathbb R}^n}.
\end{equation}
Such an upper bound condition is a typical one needed in the theory of elliptic or sub-elliptic differential operators of second order; see, for example, \cite{Da}.

Keeping in mind that the quadratic Campanato space $\mathcal{L}^{2,\lambda}(\mathbb R^n)$ is a prime example in the family of the Campanato spaces that are useful in analysis and PDEs (see e.g. \cite{X, Xr} and their references), and following \cite{DXY}, we say that a function $f$ belongs to
the space $\mathcal{L}_L^{2,\lambda}(\mathbb{R}^n)$ provided
\begin{align*}
\|f\|_{\mathcal{L}_L^{2,\lambda}}=\sup\limits_{B\subset \mathbb{R}^n} \left(r_B^{-\lambda}\int_B |f(x)-e^{-r^2_BL}f(x)|^2 \ dx\right)^{1/2}<\infty,
\end{align*}
 where $r_B$ is the radius of the ball $B$. Here, the function  $e^{-r_B^2L}f$ is seen as an average version of $f$ (at the scale $r_B^2$) and replaces the mean value $f_B$ in the definition of the Campanato space $\mathcal{L}^{2,\lambda}(\mathbb R^n)$. For this idea and its applications, we refer the reader to \cite{DM,Ma,DY1,DY2,DXY}, and especially point out that if $L$ equals the nonnegative Laplace operator $-\Delta=-\sum_{j=1}^n\partial^2/\partial x_j^2$ on $\mathbb{R}^n$, then
 $\mathcal{L}_L^{2,\lambda}(\mathbb{R}^n)$ coincides with $\mathcal{L}^{2,\lambda}(\mathbb{R}^n)$, see \cite[Proposition 8]{DXY}. Hence,
 $\mathcal{L}_L^{2,\lambda}(\mathbb{R}^n)$ generalizes  $\mathcal{L}^{2,\lambda}(\mathbb{R}^n)$.

Needless to say, the study of $\mathcal{L}_L^{2,\lambda}(\mathbb{R}^n)$ is far beyond completeness. Nevertheless, being inspired by the Choquet integral against the Hausdorff capacity used in \cite{DX} and \cite{YY}, in this paper we can at least obtain the following description of the preduality for $\mathcal{L}_L^{2,\lambda}(\mathbb{R}^n)$ as one of the fundamental problems of the Campanto function theory associated to a nonnegative self-adjoint operator with the Gaussian kernel bound \eqref{e2.1}.

 \begin{theorem}\label{t11} Let $L$ be a nonnegative self-adjoint operator obeying \eqref{e2.1} and  $\Lambda^{(\infty)}_\lambda$ be $(0,n)\ni\lambda$-dimensional Hausdorff capacity.  If $F\dot{H}_{L}^{\lambda}({\mathbb R}^{n})$ stands for the completion of
$$\left\{f\in L^2({\mathbb R}^n): \big\|f\big\|_{F\dot{H}_{L}^{\lambda}}
=\inf_{\omega}\left(\int_{{\mathbb R}^{n+1}_{+}}{|t^2Le^{-t^2L}f(x)|^{2}\omega(x,t)^{-1}} \frac{dxdt}{t}\right)^{1/2}<\infty\right\}
$$
in the norm $\|\cdot\|_{F\dot{H}_{L}^{\lambda}}$, where the infimum is taken over all nonnegative Borel functions $\omega$ on ${\mathbb R}^{n+1}_{+}$ with its non-tangential maximal function $\mathsf{N}\omega$ satisfying the Choquel integral condition $\int_{\mathbb R^n}\mathsf{N}\omega\,d\Lambda^{(\infty)}_\lambda\le 1$, then the predual of the space $\mathcal{L}_L^{2,\lambda}({\mathbb R}^n)$
is ${F}\dot{H}_{L}^{\lambda}({\mathbb R}^n)$, namely,
$$
\big({F}\dot{H}_{L}^{\lambda}({\mathbb R}^n)\big)^*=\mathcal{L}_L^{2,\lambda}({\mathbb R}^n).
$$
\end{theorem}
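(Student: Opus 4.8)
The plan is to realize this duality as an operator-adapted instance of a tent-space duality against the Hausdorff capacity, transported to the setting of $L$ by the square function $f\mapsto t^2Le^{-t^2L}f$. The starting point is the $L$-adapted Calder\'on reproducing formula: since $L$ is nonnegative and self-adjoint, the spectral calculus produces a constant $c_L$ (namely $c_L=8$) so that $f=c_L\int_0^\infty (t^2Le^{-t^2L})^2\,\tfrac{dt}{t}\,f$ for $f$ in a dense subspace of $L^2(\mathbb R^n)$, whence for such $f,g$ one has the pairing identity $\langle f,g\rangle = c_L\int_{\mathbb R^{n+1}_+}t^2Le^{-t^2L}f(x)\,\overline{t^2Le^{-t^2L}g(x)}\,\tfrac{dx\,dt}{t}$. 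I would first record the Carleson-type characterization of the Campanato norm, namely $\|g\|_{\mathcal L_L^{2,\lambda}}^2\approx \sup_{B}r_B^{-\lambda}\int_{\widehat B}|t^2Le^{-t^2L}g(x)|^2\,\tfrac{dx\,dt}{t}$, where $\widehat B$ is the Carleson box over $B$; this passes from the oscillation $g-e^{-r_B^2L}g$ to the square function via functional calculus and the Gaussian bound \eqref{e2.1}, and is the $L$-analogue of the classical fact that a BMO/Campanato function induces a Carleson measure.

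For the inclusion $\mathcal L_L^{2,\lambda}(\mathbb R^n)\subseteq (F\dot H_L^\lambda(\mathbb R^n))^*$, given $g\in\mathcal L_L^{2,\lambda}$ and $f$ in the dense subspace above, I would insert $\omega^{1/2}\omega^{-1/2}$ into the pairing identity and apply Cauchy--Schwarz to get $|\langle f,g\rangle|\le c_L\big(\int|t^2Le^{-t^2L}f|^2\omega^{-1}\tfrac{dx\,dt}{t}\big)^{1/2}\big(\int|t^2Le^{-t^2L}g|^2\omega\,\tfrac{dx\,dt}{t}\big)^{1/2}$ for every admissible $\omega$. Taking the infimum over $\omega$ turns the first factor into $\|f\|_{F\dot H_L^\lambda}$, so it remains to bound the second factor uniformly in $\omega$ by $C\|g\|_{\mathcal L_L^{2,\lambda}}$. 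This is precisely the assertion that the $\lambda$-Carleson measure $d\mu=|t^2Le^{-t^2L}g|^2\tfrac{dx\,dt}{t}$ satisfies $\sup_\omega\int_{\mathbb R^{n+1}_+}\omega\,d\mu\le C\,\|\mu\|_{\mathrm{Carleson},\lambda}$, a duality between the Choquet integral $\int_{\mathbb R^n}\mathsf{N}\omega\,d\Lambda^{(\infty)}_\lambda$ and the $\lambda$-Carleson condition; this is the capacitary machinery of \cite{DX,YY}, which combined with the characterization of the previous paragraph yields the bound, and then boundedness together with density gives the embedding.

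For the reverse inclusion I would invoke the atom/molecular decomposition of $F\dot H_L^\lambda$ advertised in the abstract: every $f\in F\dot H_L^\lambda$ decomposes as $f=\sum_k c_k a_k$ with atoms (or molecules) $a_k$ adapted to $L$ and $\|\{c_k\}\|_{\ell^1}\le C\|f\|_{F\dot H_L^\lambda}$, so a bounded functional $\ell\in(F\dot H_L^\lambda)^*$ is determined by its values on atoms. Testing $\ell$ against atoms supported on Carleson boxes and using the reproducing formula, I would identify $\ell$ with pairing against a function $g$, and the uniform bound $|\ell(a)|\le\|\ell\|$ on atoms would translate, through the square-function characterization and box testing, into the finiteness of $\|g\|_{\mathcal L_L^{2,\lambda}}$. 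Conceptually this is the dual half of the tent-space duality: the map $f\mapsto t^2Le^{-t^2L}f$ embeds $F\dot H_L^\lambda$ isometrically into a Choquet-weighted tent space $T^1$, whose dual is the $\lambda$-Carleson tent space $T^\infty_\lambda$, and the adjoint reproducing operator $H\mapsto c_L\int_0^\infty t^2Le^{-t^2L}H(\cdot,t)\,\tfrac{dt}{t}$ carries $T^\infty_\lambda$ onto $\mathcal L_L^{2,\lambda}$; chasing these two maps delivers both inclusions and the norm equivalence.

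The hard part, I expect, lies in two intertwined technical points. First, the square-function (Carleson-measure) characterization of $\mathcal L_L^{2,\lambda}$ must be established from the Gaussian upper bound \eqref{e2.1} alone, without pointwise regularity of the kernel of $t^2Le^{-t^2L}$; this forces an $L^2$ functional-calculus argument, splitting the reproducing integral at the scale $r_B$ and exploiting the off-diagonal (Davies--Gaffney) decay implied by \eqref{e2.1}, rather than the classical pointwise comparison with the mean value. Second, the surjectivity in the reverse inclusion must be handled modulo the kernel of the square function, since $\mathcal L_L^{2,\lambda}$ is naturally defined only up to the $L$-analogue of constants; here I would rely on the cancellation built into the atoms and on a limiting argument over the completion to promote the definition of the representing $g$ from test elements to all of $(F\dot H_L^\lambda)^*$, while verifying that $g$ lies in $L^2_{\mathrm{loc}}$ with the correct oscillation.
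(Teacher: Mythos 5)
Your proposal follows a genuinely different route from the paper. You run the classical Fefferman--Stein/Coifman--Meyer--Stein scheme transported to $L$: the Calder\'on reproducing formula turns $\langle f,g\rangle$ into an integral over ${\mathbb R}^{n+1}_+$, Cauchy--Schwarz against an admissible weight $\omega$ produces the factor $\|f\|_{F\dot{H}_L^{\lambda}}$, and the remaining factor is controlled by a capacitary Carleson duality plus a square-function (Carleson-measure) characterization of $\mathcal{L}_L^{2,\lambda}({\mathbb R}^n)$. The paper never uses either of these last two ingredients. Instead it first establishes the atomic and molecular decompositions of $F\dot{H}_L^{\lambda}({\mathbb R}^n)$ (Theorems \ref{th2.7}, \ref{th2.10}, \ref{th2.11}), and then proves the duality molecule by molecule: for the forward direction (Theorem \ref{th3.3}) it splits $\langle f,m\rangle$ using the resolvent $(I+r_B^2L)^{-1}$ and sums over the annuli $U_j(B)$ using the decay of a $(2,1,\lambda,\epsilon)$-molecule with $\epsilon>(n-\lambda)/2$; for the converse (Theorem \ref{th3.4}) it observes that a bounded functional $\ell$ is automatically bounded on molecules, hence lies in $(\mathcal{M}^{2,\lambda,\epsilon}(L))^{\ast}$, and then tests $\ell$ against the specific molecules $r_B^{-\lambda/2}(I-(I+r_B^2L)^{-1})\varphi$ with $\varphi\in L^2(B)$, which yields the Campanato bound directly by $L^2(B)$-duality (via Lemma \ref{lemma3.2}). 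Your route buys a cleaner conceptual picture (an isometric embedding into a Choquet tent space and its adjoint), while the paper's route avoids ever having to characterize $\mathcal{L}_L^{2,\lambda}$ by Carleson measures.

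That said, as written your argument has concrete gaps. First, the Carleson-type characterization $\|g\|_{\mathcal{L}_L^{2,\lambda}}^2\approx\sup_B r_B^{-\lambda}\int_{T(B)}|t^2Le^{-t^2L}g|^2\,\frac{dxdt}{t}$ is the load-bearing lemma of your forward direction, and you neither prove it nor cite a reference valid under the bare Gaussian bound \eqref{e2.1}; under these hypotheses it is of essentially the same depth as the theorem itself, so it cannot be ``recorded.'' Second, the pairing identity $\langle f,g\rangle=c_L\int Q_{t^2}f\,\overline{Q_{t^2}g}\,\frac{dxdt}{t}$ is only justified for $f,g\in L^2$; for $g\in\mathcal{L}_L^{2,\lambda}$ you must first specify the ambient space in which $g$ lives so that $Q_{t^2}g$ makes sense and the truncated reproducing integrals converge --- this is exactly what the paper's space $\mathcal{E}_\lambda=\bigcap_{\epsilon>0}(\mathcal{M}^{2,\lambda,\epsilon}(L))^{\ast}$ in Definition \ref{de41} is for, and your proposal has no substitute. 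Third, in the converse direction ``identify $\ell$ with pairing against a function $g$'' is a statement of the goal, not a mechanism: you need an a priori reason that $\ell$ is represented by an object on which $e^{-r_B^2L}$ (or the resolvent) acts and which is locally $L^2$ after subtracting its average; the paper gets this for free from boundedness on molecules, whereas testing on atoms alone does not obviously produce it. Until these three points are filled in, the proposal is a plausible alternative outline rather than a proof.
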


The proof of the above preduality theorem proceeds via the forthcoming three sections. In Section 2, we recall some basic facts about Choquet integrals and square tent spaces. In Section 3, we make the space $F\dot{H}^{\lambda}_L(\mathbb{R}^n)$ more transparent via giving its atomic (or molecular) decomposition. In Section 4, we use a new description of $\mathcal{L}_L^{2,\lambda}({\mathbb R}^n)$ and the atomic
(or molecular) characterization of  $F\dot{H}^{\lambda}_L(\mathbb{R}^n)$ as a tool to complete the argument for Theorem \ref{t11}.

\begin{remark} {\rm(i)} Hopefully, the investigation of $ \mathcal{L}_L^{2,\lambda}({\mathbb R}^n)$ and hence $F\dot{H}^{\lambda}_L(\mathbb{R}^n)$ can be  moved appropriately to a more general setting of operators on metric spaces as described in \cite[Chapter 7]{Ou}.

{\rm (ii)} From now on, the letters $C$ and $c$  will denote (possibly different) constants that are independent of the essential variables.
\end{remark}

\section{Necessary groundwork}

\setcounter{equation}{0}

\subsection{Choquet integrals} We shall work exclusively with the upper half-space  $
 {\Bbb R}^{n+1}_+$.
If $x\in {\Bbb R}^n$, $\Gamma(x)$
will denote the   cone  $\Gamma(x)=\{(y,t)\in {\Bbb R}^{n+1}_+:
|x-y|<t\}$.   For any  set $E\subset{\mathbb R}^n$, the tent over
$E$, $T(E)$, is the set $\{(y,t)\in {\mathbb R}^{n+1}_{+}: B(y,t)\subset E\}$.
The nontangential maximal function $\mathsf{N}f$ of a measurable function $f$ on ${\mathbb R}^{n+1}_{+}$
is defined by $$\mathsf{N}f(x)=\sup_{(y,t)\in\Gamma(x)}|f(y,t)|.
$$

Let us recall the notion of Hausdorff capacities; see, for example,   \cite{A1,A2}.
\begin{definition} If $\lambda\in(0,n)$ and $E\subset \mathbb{R}^n$, then $\lambda$-dimensional Hausdorff capacity of $E$ is defined by
\begin{align*}
\Lambda_\lambda^{(\infty)}(E):=\inf\Big\{\sum_j r_j^\lambda:\ E\subset \bigcup_{j=1}^\infty B(x_j,r_j)\Big\},
\end{align*}
where the infimum is taken over all covers of $E$ by balls $B(x_j,r_j)$ with centers $x_j$ and radii $r_j$.
\end{definition}

A dyadic version of the Hausdorff capacity, $\widetilde{\Lambda}_{\lambda}^{(\infty)}$, was introduced in \cite{YY2}, which is defined by
\begin{align*}
\widetilde{\Lambda}_{\lambda}^{(\infty)}(E)=\inf \big\{\sum_j l(I_j)^\lambda:\ E\subset \big(\bigcup_j I_j\big)^{\rm o} \big\},
\end{align*}
where the infimum ranges only over covers of $E$ by dyadic cubes $\{I_j\}_j$, and $A^{\rm o}$ denotes the interior of the set $A$.

 It is well known that $\lambda$-dimensional Hausdorff capacity ${\Lambda}_{\lambda}^{(\infty)}$ and $\widetilde{\Lambda}_{\lambda}^{(\infty)}$
 are equivalent -- more precisely, there exist positive constants $C_{1}(n,\lambda)$ and $C_2(n,\lambda)$, depending on $n$ and $\lambda$, such that
\begin{eqnarray}\label{e1.3}
C_{1}(n,\lambda)\Lambda_{\lambda}^{(\infty)}(E)\leq\widetilde{\Lambda}_{\lambda}^{(\infty)}(E)\leq C_{2}(n,\lambda)\Lambda_{\lambda}^{(\infty)}(E),
\ \ {\rm for \ all \ } E\subset \mathbb{R}^n.
\end{eqnarray}

Next, we recall a notion of the Choquet integrals with respect to the Hausdorff capacities
 (cf. \cite{A1,A2}): for a function $f:{\mathbb R}^n\rightarrow[0,\infty]$, define
\begin{eqnarray*}
\int_{{\mathbb R}^n}fd\Lambda_{\lambda}^{(\infty)}:=\int_{0}^{\infty}\Lambda_{\lambda}^{(\infty)}(\{x\in{\mathbb R}^n:f(x)>t\})\,dt.
\end{eqnarray*}

\subsection{Square tent spaces} Definitions \ref{def 1.2} \& \ref{def 1.4} below are inspired by   \cite{DX}.

\begin{definition} \label{def 1.2}  Let $\lambda\in(0,n)$. The space  $F\dot{T}^\lambda(\mathbb{R}^{n+1}_+)$
 consists of all Lebesgue measurable functions $f$ on ${\mathbb R}^{n+1}_{+}$
  for which $$\|f\|_{F\dot{T}^\lambda}=
  \inf_{\omega}\bigg(\int_{{\mathbb R}^{n+1}_{+}}\frac{|f(x,t)|^{2}}{\omega(x,t)}\frac{dxdt}{t}\bigg)^{1/2}<\infty,
$$
 where the infimum is taken over all nonnegative Borel functions $\omega$ on ${\mathbb R}^{n+1}_{+}$ with
\begin{eqnarray}\label{e1.2}
\int_{{\mathbb R}^n}\mathsf{N}\omega d\Lambda_{\lambda}^{(\infty)}\leq1,
\end{eqnarray}
and with the restriction that $\omega$ is allowed to vanish only where $f$ vanishes.
\end{definition}

Note that if a function $\omega$ satisfies (\ref{e1.2}), then $\omega(x,t)\leq Ct^{-\lambda}$.
This shows that condition  $\|f\|_{F\dot{T}^\lambda}=0$ implies $f=0$ almost everywhere (see \cite{DX, YY}).

The following lemma shows that  $\|f\|_{F\dot{T}^\lambda}$ satisfies the triangle inequality
with a constant, and then  $\|\cdot\|_{F\dot{T}^\lambda}$ is   a quasi-norm. It can be shown that
the space $F\dot{T}^\lambda({\mathbb R}^{n+1}_+)$ is complete under this quasi-norm.

\begin{lemma}\label{le1.6}   Let $\lambda\in(0,n)$.
If $\sum_{j}\|g_{j}\|_{F\dot{T}^\lambda}<\infty$, then $g=\sum_{j}g_{j}\in F\dot{T}^\lambda({\mathbb R}^{n+1}_+)$ with
\begin{eqnarray*}
\|g\|_{F\dot{T}^\lambda}\leq \sqrt{C_{1}(n,\lambda)^{-1}C_{2}(n,\lambda)}\sum_{j}\|g_{j}\|_{F\dot{T}^\lambda},
\label{e1.5}
\end{eqnarray*}
where $C_{1}(n,\lambda)$ and $C_{2}(n,\lambda)$ are the constants in (\ref{e1.3}).
\end{lemma}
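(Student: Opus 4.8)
The plan is to convert the desired quasi-triangle inequality into a genuine triangle inequality by replacing, in the admissibility constraint on $\omega$, the Hausdorff capacity $\Lambda_\lambda^{(\infty)}$ with its dyadic counterpart $\widetilde{\Lambda}_\lambda^{(\infty)}$, and then to exploit that the Choquet integral against $\widetilde{\Lambda}_\lambda^{(\infty)}$ is subadditive. Concretely, I would introduce the auxiliary quantity $\|f\|_{\widetilde{F\dot T}^\lambda}$, defined exactly as $\|f\|_{F\dot T^\lambda}$ but with the constraint $\int_{\mathbb R^n}\mathsf{N}\omega\,d\widetilde{\Lambda}_\lambda^{(\infty)}\le 1$. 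Passing the equivalence \eqref{e1.3} through the layer-cake formula gives $C_1(n,\lambda)\int f\,d\Lambda_\lambda^{(\infty)}\le\int f\,d\widetilde{\Lambda}_\lambda^{(\infty)}\le C_2(n,\lambda)\int f\,d\Lambda_\lambda^{(\infty)}$ for nonnegative $f$; combined with the fact that the map $\omega\mapsto c\,\omega$ scales both the constraint and the energy $\int|f|^2\omega^{-1}\,t^{-1}\,dx\,dt$ homogeneously, this yields the two comparisons $\|f\|_{\widetilde{F\dot T}^\lambda}\le\sqrt{C_2(n,\lambda)}\,\|f\|_{F\dot T^\lambda}$ and $\|f\|_{F\dot T^\lambda}\le C_1(n,\lambda)^{-1/2}\|f\|_{\widetilde{F\dot T}^\lambda}$ (by rescaling an admissible dyadic weight to an admissible Hausdorff weight, and vice versa).

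Next I would prove that $\|\cdot\|_{\widetilde{F\dot T}^\lambda}$ obeys the triangle inequality with constant $1$. Fix $\varepsilon>0$ and choose $\varepsilon_j\ge 0$ with $\sum_j\varepsilon_j\le\varepsilon$; for each $j$ select an admissible weight $\omega_j$ (vanishing only where $g_j$ does) with $\int_{\mathbb R^n}\mathsf{N}\omega_j\,d\widetilde{\Lambda}_\lambda^{(\infty)}\le 1$ and energy at most $b_j^2$, where $b_j:=\|g_j\|_{\widetilde{F\dot T}^\lambda}+\varepsilon_j$. With $B:=\sum_j b_j$, define the single weight $\omega:=\sum_j (b_j/B)\,\omega_j$, which vanishes only where every $g_j$, hence $g=\sum_j g_j$, vanishes, so it is admissible in the sense required by Definition \ref{def 1.2}. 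The pointwise Cauchy--Schwarz inequality $|g|^2\le\big(\sum_j|g_j|\big)^2\le\big(\sum_j (B/b_j)\,|g_j|^2\omega_j^{-1}\big)\,\omega$ gives $|g|^2\omega^{-1}\le\sum_j (B/b_j)\,|g_j|^2\omega_j^{-1}$, so the energy of $g$ against $\omega$ is at most $\sum_j (B/b_j)\,b_j^2=B^2$; this choice of coefficients $b_j/B$ is exactly the one optimizing the trade-off. For the constraint, the sublinearity $\mathsf{N}\omega\le\sum_j (b_j/B)\,\mathsf{N}\omega_j$ followed by subadditivity of the dyadic Choquet integral yields $\int_{\mathbb R^n}\mathsf{N}\omega\,d\widetilde{\Lambda}_\lambda^{(\infty)}\le\sum_j (b_j/B)=1$. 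Hence $\|g\|_{\widetilde{F\dot T}^\lambda}\le B$, and letting $\varepsilon\to0$ gives $\|g\|_{\widetilde{F\dot T}^\lambda}\le\sum_j\|g_j\|_{\widetilde{F\dot T}^\lambda}$.

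Chaining the two comparisons with this dyadic triangle inequality then produces $\|g\|_{F\dot T^\lambda}\le C_1(n,\lambda)^{-1/2}\|g\|_{\widetilde{F\dot T}^\lambda}\le C_1(n,\lambda)^{-1/2}\sum_j\|g_j\|_{\widetilde{F\dot T}^\lambda}\le\sqrt{C_1(n,\lambda)^{-1}C_2(n,\lambda)}\sum_j\|g_j\|_{F\dot T^\lambda}$, which is precisely the asserted bound; the membership $g\in F\dot T^\lambda(\mathbb R^{n+1}_+)$ follows because the right-hand side is finite (alternatively, from completeness applied to the partial sums, which are quasi-Cauchy by the finite-sum version of the inequality). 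I expect the genuine obstacle to be the subadditivity of the Choquet integral: the non-dyadic capacity $\Lambda_\lambda^{(\infty)}$ is only countably subadditive, and its Choquet integral need not be subadditive, so one truly needs the strong subadditivity of $\widetilde{\Lambda}_\lambda^{(\infty)}$ to justify $\int\mathsf{N}\omega\,d\widetilde{\Lambda}_\lambda^{(\infty)}\le\sum_j (b_j/B)\int\mathsf{N}\omega_j\,d\widetilde{\Lambda}_\lambda^{(\infty)}$. This is exactly why the detour through the dyadic capacity and the equivalence constants of \eqref{e1.3} are forced upon us, and why the sharp constant in the statement is $\sqrt{C_1(n,\lambda)^{-1}C_2(n,\lambda)}$ rather than $1$.
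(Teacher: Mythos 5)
Your argument is correct and is essentially the proof the paper has in mind: the paper defers entirely to \cite[Lemma 5.3]{DX}, whose argument is precisely your detour through the dyadic capacity $\widetilde{\Lambda}_{\lambda}^{(\infty)}$ (whose Choquet integral is subadditive, cf.\ \cite{YY2}), the convex combination $\omega=\sum_j(b_j/B)\omega_j$ of near-optimal weights, and the pointwise Cauchy--Schwarz step. The constant $\sqrt{C_{1}(n,\lambda)^{-1}C_{2}(n,\lambda)}$ you derive matches the statement exactly, which confirms you have reconstructed the intended proof.
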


\begin{proof} The proof follows from a slight modification of an argument as in   \cite[Lemma 5.3]{DX}. We omit the detail here.
\end{proof}

\begin{definition} \label{def 1.4} Let $\lambda\in(0,n)$. A function $a$ on ${\mathbb R}^{n+1}_{+}$ is
said to be an $F\dot{T}^\lambda$-atom associated with a ball $B$, if $a$ is supported in $T(B)$ and satisfies
\begin{eqnarray*}
\int_{T(B)}|a(x,t)|^{2}\frac{dxdt}{t}\leq{r_{B}^{-\lambda}}.
\end{eqnarray*}
\end{definition}

\medskip

\noindent Recall that the space $T^2_2({\mathbb R}^{n+1}_+)$ is a tent space (see \cite{CMS}), which is defined by

\begin{eqnarray*}
T^2_2({\mathbb R}^{n+1}_+)=\Big\{ f(y,t):\ x\mapsto \Big(\int_{0}^{\infty}\!\!\int_{|y-x|<t}|f(y,t)|^{2}
\frac{dydt}{t^{n+1}}\Big)^{1/2}\ \hbox{is\ in}\ L^{2}({\mathbb R}^n)\Big\}.
\end{eqnarray*}

 \noindent
\begin{theorem}\label {th1.7}  Let $\lambda\in(0,n)$. Then the following results hold:

{\rm (i)} $f\in F\dot{T}^\lambda({\mathbb R}^{n+1}_+)$
if and only if there is a sequence $\{a_j\}$ of $F\dot{T}^\lambda$-atoms and an $l^{1}$-sequence $\{\lambda_{j}\}$ such that
\begin{align}\label{ee3.1}
f=\sum_{j}\lambda_{j}a_{j}.
\end{align}
 Moreover,
$$
\|f\|_{F\dot{T}^\lambda}\approx \inf \{\sum_{j}|\lambda_{j}|:f=\sum_{j}\lambda_{j}a_{j}\},
$$
where the infimum is taken over all possible forms $f$ in (\ref{ee3.1}). The right hand side thus
defines a norm on $F\dot{T}^\lambda({\mathbb R}^{n+1}_+)$ which makes it into a Banach spaces.

{\rm (ii)} If $f\in F\dot{T}^\lambda({\mathbb R}^{n+1}_+)\cap T_2^2({\mathbb R}^{n+1}_+)$, then the
 decomposition (\ref{ee3.1}) also converges in $T^2_2({\mathbb R}^{n+1}_+).$
\end{theorem}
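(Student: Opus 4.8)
The plan is to prove Theorem \ref{th1.7} by adapting the classical atomic decomposition of tent spaces due to Coifman--Meyer--Stein \cite{CMS}, combined with the capacity-weighted structure of $F\dot{T}^\lambda$ introduced in \cite{DX}. I would prove the two implications of (i) separately and then track the $T^2_2$-convergence for (ii).

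\medskip
\noindent\textbf{The easy direction.}
First I would establish that every sum $f=\sum_j\lambda_j a_j$ of $F\dot{T}^\lambda$-atoms with $\{\lambda_j\}\in\ell^1$ lies in $F\dot{T}^\lambda$ with $\|f\|_{F\dot{T}^\lambda}\lesssim\sum_j|\lambda_j|$. This is almost immediate from Lemma \ref{le1.6}: I would first check that a single $F\dot{T}^\lambda$-atom $a$ associated with a ball $B$ satisfies $\|a\|_{F\dot{T}^\lambda}\le C$. Indeed, choosing the competitor weight $\omega=r_B^{-\lambda}\mathbf{1}_{T(B)}$ (times an appropriate normalization supported where $a$ is supported), its nontangential maximal function $\mathsf{N}\omega$ is controlled on $B$ so that $\int_{\mathbb R^n}\mathsf{N}\omega\,d\Lambda^{(\infty)}_\lambda\lesssim r_B^{-\lambda}\cdot\Lambda^{(\infty)}_\lambda(B)\lesssim 1$ after renormalizing by a dimensional constant, while the atom's size bound in Definition \ref{def 1.4} makes the energy integral $\int_{T(B)}|a|^2\omega^{-1}\,t^{-1}dxdt$ bounded. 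Then Lemma \ref{le1.6} upgrades the finite-atom estimate to the full series, giving $\|f\|_{F\dot{T}^\lambda}\le\sqrt{C_1^{-1}C_2}\sum_j|\lambda_j|\,\|a_j\|_{F\dot{T}^\lambda}\lesssim\sum_j|\lambda_j|$.

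\medskip
\noindent\textbf{The hard direction.}
Conversely, given $f\in F\dot{T}^\lambda$, I must manufacture the atoms. The strategy is a stopping-time/level-set decomposition in the spirit of \cite{CMS} but performed relative to the dyadic Hausdorff capacity $\widetilde\Lambda^{(\infty)}_\lambda$, exploiting its equivalence \eqref{e1.3} with $\Lambda^{(\infty)}_\lambda$ so that all capacitary estimates may be run on dyadic cubes. Fixing a near-optimal weight $\omega$ for $f$, I would slice the upper half-space by the dyadic level sets of a suitable ``density'' of $\omega$ (or of $\mathsf{N}\omega$), producing for each dyadic generation a collection of maximal cubes $\{I_{j}^k\}$; the pieces of $f$ supported in the associated tents $T(I_j^k)$, after normalization by $\ell(I_j^k)^{-\lambda/2}$ times the local energy, become the $F\dot{T}^\lambda$-atoms $a_j$, and the coefficients $\lambda_j$ are the normalizing factors. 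The main obstacle, and the step I expect to absorb most of the work, is the \emph{coefficient summability} $\sum_j|\lambda_j|\lesssim\|f\|_{F\dot{T}^\lambda}$: this requires summing $\ell(I_j^k)^{\lambda}$-type quantities against the capacity and reassembling them into the Choquet integral of $\mathsf{N}\omega$, which is exactly where the subadditivity of $\widetilde\Lambda^{(\infty)}_\lambda$ over the stopping cubes and the layer-cake definition of $\int\cdot\,d\Lambda^{(\infty)}_\lambda$ must be balanced against the weighted energy $\int|f|^2\omega^{-1}t^{-1}\,dxdt$. I would control this by a Cauchy--Schwarz argument distributing $\omega^{1/2}\omega^{-1/2}$ across each tent and invoking the defining bound $\int\mathsf{N}\omega\,d\Lambda^{(\infty)}_\lambda\le1$.

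\medskip
\noindent\textbf{The $T^2_2$-convergence.}
For (ii), assuming in addition $f\in T^2_2$, I would show the same decomposition converges in $T^2_2$. Here the point is that the stopping-time construction partitions (up to controlled overlap) the region where $f$ lives, so the tail $\sum_{j>J}\lambda_j a_j$ is supported on a shrinking portion of the half-space whose $T^2_2$-energy is dominated by $\int_{\{\text{tail region}\}}\int|f(y,t)|^2\,t^{-n-1}\,dydt$, which tends to $0$ by dominated convergence since $f\in T^2_2$. I would make this rigorous by verifying that the atoms inherit, through their normalization, a uniform $T^2_2$-bound depending on $\ell(I_j^k)$ and that the finite partial sums approximate $f$ pointwise a.e., so that the absolute convergence in $F\dot{T}^\lambda$ and the square-integrability together force $T^2_2$-convergence. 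The only subtlety is keeping the bounded-overlap of the selected tents under control, which again follows from the maximality of the stopping cubes.
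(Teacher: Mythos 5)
Your plan is correct and follows essentially the route the paper itself takes: the paper omits the argument and simply defers to \cite[Theorem 4.1]{YY}, \cite[Theorem 5.4]{DX} and \cite[Proposition 4.10]{HLMMY}, and those proofs are precisely what you reconstruct --- the single-atom bound via the weight $c\,r_B^{-\lambda}\mathbf{1}_{T(B)}$ combined with Lemma \ref{le1.6} for the easy inclusion; the level sets of $\mathsf{N}\omega$ for a near-optimal weight, dyadic-capacity covers via \eqref{e1.3}, and a Cauchy--Schwarz step balancing $\sum_{j,k}2^{k}\ell(I_j^k)^{\lambda}$ (controlled by the layer-cake Choquet integral of $\mathsf{N}\omega$) against the weighted energy for the coefficient summability; and disjointness of the stopping regions plus dominated convergence for the $T^2_2$-convergence in (ii). No gaps beyond the level of detail one expects in a sketch.
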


\begin{proof}
The proof of (i) is similar to that of  \cite[Theorem 4.1]{YY} or \cite[Theorem 5.4]{DX}.
For the proof of (ii), we can follow an  argument of  \cite[Proposition 4.10]{HLMMY}) to show it, and so omit details here.
\end{proof}

\section{The space $F\dot{H}_{L}^{\lambda}(\mathbb R^n)$}
\setcounter{equation}{0}

\subsection{An atomic decomposition of $F\dot{H}_{L}^{\lambda}({\mathbb R}^{n})$} Given a nonnegative self-adjoint operator $L$ on $L^2({\mathbb R^n})$ satisfying \eqref{e2.1}. For any $(x,t)\in {\mathbb R}^{n}\times (0, +\infty)={\mathbb R}^{n+1}_+$
and for every $f\in L^2({\mathbb R}^n)$, define
$$
\begin{cases}
P_tf(x)=e^{-tL}f(x)=\int_{{\mathbb R}^n} p_t(x,y)f(y)dy;\\
Q_tf(x)=t Le^{-tL}f(x)=\int_{{\mathbb R}^n} -t\Big(\frac{d p_t(x,y)}{dt}\Big)f(y)dy.
\end{cases}
$$
Then, like $p_t(x,y)$ obeying \eqref{e2.1} the kernel $q_{t}(x,y)$ of
$Q_{t}$ satisfies
\begin{equation}\label{e2.2}
|q_{t}(x,y)|\leq C t^{-n/2}\exp\Big(-{|x-y|^2\over ct}\Big) \ \  {\rm for\ all} \ t>0\  \& \ {\rm for\ a.e.}\ (x,y)\in \mathbb R^n\times{{\mathbb R}^n}.
\end{equation}
See, for instance, \cite[Theorem~6.17]{Ou}.

\begin{definition} \label{def 2.1}  Let $\lambda\in(0,n)$ and $L$ be a nonnegative self-adjoint operator obeying \eqref{e2.1}. The space $F\dot{H}_{L}^{\lambda}({\mathbb R}^{n})$
is defined to be the completion of
$$\left\{f\in L^2({\mathbb R}^n): \big\|f\big\|_{F\dot{H}_{L}^{\lambda}}
=\|t^2Le^{-t^2L}(f)\|_{F\dot{T}^\lambda}<\infty\right\}
$$
in the norm $\|\cdot\|_{F\dot{H}_{L}^{\lambda}}$.
\end{definition}

\begin{definition} \label{def 2.2} Given $(M,\lambda)\in{\mathbb N}\times(0, n)$ and $L$, a nonnegative self-adjoint operator enjoying \eqref{e2.1}.

{\rm (i)} A function $a\in L^2({\mathbb R}^n)$ is called a
$(2,M,\lambda)$-atom associated to the operator $L$ if there exist a function
$b\in {\mathcal D}(L^M)$ and a ball $B$ such that

\begin{itemize}

\item $a=L^M b$;

\item {\rm supp}\  $L^{k}b\subset B, \ k=0, 1, \dots, M$;

\item $\|(r_B^2L)^{k}b\|_{L^2({\mathbb R}^n)}\leq r_B^{2M-\lambda/2},\ k=0,1,\dots,M$.
\end{itemize}

{\rm (ii)} We say that $\sum\lambda_j a_j$ is an atomic
$(2,M,\lambda)$-representation of f if $ \{\lambda_j\}_{j=0}^{\infty}\in {\ell}^1$,
each $a_j$ is a $(2,M,\lambda)$-atom, and the sum converges in $L^2({\mathbb R}^n).$  Set
\begin{equation*}
\HAL\,:=\Big\{f: \mbox{f has an atomic $(2,M,\lambda)$-representation} \Big\},
\end{equation*}
with the norm given by
\begin{align*}
&\|f\|_{\HAL}\\
&={\inf}\Big\{\sum_{j=0}^{\infty}|\lambda_j|:
f=\sum\limits_{j=0}^{\infty}\lambda_ja_j
\ \mbox{is an atomic $(2,M,\lambda)$-representation}\Big\}.
\end{align*}
{\rm (iii)} The space $F\dot{H}_{L,at,M}^{\lambda}({\mathbb R}^{n})$ is then defined as the completion of  $\HAL$ with respect to $\|\cdot\|_{\HAL}$.
\end{definition}

Recall that, if $E_L(\lambda)$ denotes the spectral
decomposition of a nonnegative self-adjoint operator $L$ on $L^2(\mathbb R^n)$, then for every bounded Borel function
$F:[0,\infty)\to{\Bbb C}$, one defines the operator
$F(L): L^2(\RN)\to L^2(\RN)$ by the formula

\begin{eqnarray}\label{mL}
F(L):=\int_0^{\infty}F(\lambda)dE_L(\lambda).
\end{eqnarray}

Hence the operator $\cos(t\sqrt{L})$ is
well-defined on $L^2(\RN)$ for all $t>0$. Thus it makes sense to make the following definition.

\begin{definition}\label{finitespeed} A nonnegative self-adjoint operator
$L$ is said to satisfy the finite speed propagation property for solutions
of the corresponding wave equation if there exists a constant $c_0>0$ such that

\begin{eqnarray}\label{es3.8}
\langle \cos(t\sqrt{L})f_1, \,  f_2\rangle=\int \cos(t\sqrt{L})f_1(x)\overline{f_2(x)}\,dx=0
\end{eqnarray}

\noindent for all $0<c_0 t<d(U_1, U_2)$ and $U_i\subset \RN$, $f_i\in L^2(U_i)$,
$i=1,2$.
\end{definition}

In particular, if $K_{\cos(t\sqrt{L})}(x,y)$ denotes the integral kernel
of the operator $\cos(t\sqrt{L})$, then (\ref{es3.8}) entails that for every
$t>0$,

\begin{eqnarray}\label{es3.9}
{\rm supp}\ K_{\cos(t\sqrt{L})}\subseteq {\mathcal D}_t:=
\Big\{(x,y)\in \RN\times \RN: \,|x-y|\leq c_0t\Big\}.
\end{eqnarray}

\begin{prop}\label{prop3.3} Let $L$ be a nonnegative self-adjoint operator
acting on $L^2(\RN)$. Then (\ref{e2.1}) implies (\ref{es3.8}).
\end{prop}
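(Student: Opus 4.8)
The plan is to route the implication $\eqref{e2.1}\Rightarrow\eqref{es3.8}$ through the $L^2$ Davies--Gaffney off-diagonal estimate for the heat semigroup, and then to recover the support condition for the wave propagator by an analytic-continuation argument in the complex time variable. First I would show that the pointwise Gaussian bound \eqref{e2.1} forces, for all $f_i\in L^2(U_i)$ and with $d=d(U_1,U_2)$, the operator estimate
\[
\big|\langle e^{-tL}f_1,f_2\rangle\big|\le C\exp\Big(-\frac{d^2}{2ct}\Big)\|f_1\|_{2}\|f_2\|_{2},\qquad t>0 .
\]
This is elementary (cf. \cite{Da}): for $x\in U_2,\ y\in U_1$ one splits the Gaussian as $\exp(-|x-y|^2/(ct))\le \exp(-d^2/(2ct))\exp(-|x-y|^2/(2ct))$, and the residual kernel $t^{-n/2}\exp(-|x-y|^2/(2ct))$ has $L^1$-norm bounded uniformly in $t$, so Schur's test (or Young's inequality) controls the remaining double integral by $C\|f_1\|_2\|f_2\|_2$.

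The substantive step is to pass from this real-time bound to finite speed propagation. Set $\Phi(z)=\langle e^{-zL}f_1,f_2\rangle$, holomorphic on $\{\operatorname{Re}z>0\}$ with $|\Phi(z)|\le\|f_1\|_2\|f_2\|_2$ there (spectral theorem), and enjoying the Davies--Gaffney decay above on the positive real axis. I would upgrade these to the complex-time estimate
\[
\big|\Phi(z)\big|\le C\exp\Big(-\frac{d^2}{2c}\,\operatorname{Re}\tfrac1z\Big)\|f_1\|_2\|f_2\|_2,\qquad \operatorname{Re}z>0 .
\]
To obtain it, substitute $w=1/z$ (which preserves the right half-plane) and study $\widetilde h(w)=\exp(\tfrac{d^2}{2c}w)\,\Phi(1/w)$: it is holomorphic in $\{\operatorname{Re}w>0\}$, bounded by $\|f_1\|_2\|f_2\|_2$ on the imaginary axis and, by Davies--Gaffney, on the positive real axis as well, while it grows at most like $\exp(\tfrac{d^2}{2c}|w|)$. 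Cutting the half-plane into the two sectors of opening $\pi/2$ determined by the positive real axis and applying the Phragm\'en--Lindel\"of principle on each (where order-$1$ growth is subcritical) yields $|\widetilde h(w)|\le C\|f_1\|_2\|f_2\|_2$, which is exactly the claimed bound.

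Finally, I would extract the support statement from the subordination identity $e^{-zL}=(4\pi z)^{-1/2}\int_{\mathbb R}e^{-\xi^2/(4z)}\cos(\xi\sqrt L)\,d\xi$, valid spectrally for $\operatorname{Re}z>0$. Pairing with $f_1,f_2$ and writing $F(\xi)=\langle\cos(\xi\sqrt L)f_1,f_2\rangle$ (even and bounded), the substitution $w=1/(4z)$ turns $\Phi(1/(4w))$ into $(w/\pi)^{1/2}\Theta(w)$, where $\Theta(w)=\int_{\mathbb R}e^{-w\xi^2}F(\xi)\,d\xi$ is the Laplace transform of $\eta\mapsto F(\sqrt\eta)\,\eta^{-1/2}$; the complex-time bound becomes $|\Theta(w)|\le C|w|^{-1/2}\exp(-\tfrac{2d^2}{c}\operatorname{Re}w)$. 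A standard Laplace-transform support theorem then forces $F(\xi)=0$ for $|\xi|<\sqrt{2/c}\,d$, which is precisely \eqref{es3.8} with $c_0=\sqrt{c/2}$. The main obstacle is the middle step: the Phragm\'en--Lindel\"of argument sits at the borderline order $1$ for the half-plane, so the decomposition into right-angle sectors — using the real-axis decay as interior data — is essential and must be arranged with care, and it is the tracking of the Gaussian constant $c$ through this step that ultimately pins down the propagation speed $c_0$.
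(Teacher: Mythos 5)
Your argument is correct and is precisely the route the paper intends: the paper's proof of Proposition~\ref{prop3.3} simply cites \cite[Theorem~2]{Si2} and \cite[Theorem~3.4]{CS}, and what you have written out --- the Gaussian bound \eqref{e2.1} yielding the $L^2$ Davies--Gaffney estimate, the Phragm\'en--Lindel\"of upgrade to complex time on the two quarter-planes, and the transmutation/Laplace-transform support argument giving \eqref{es3.8} with $c_0=\sqrt{c/2}$ --- is exactly the content of those references. The only difference is that you supply in full the details the paper delegates to the citations.
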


\begin{proof} The argument follows from \cite[Theorem~2]{Si2}
and \cite[Theorem 3.4]{CS}.
\end{proof}

From Proposition~\ref{prop3.3} and (\ref{e2.1}) it follows that the kernel
$K_{\cos(t\sqrt{L})}(x,y)$ of the operator ${\cos(t\sqrt{L})}$ has the
property (\ref{es3.9}). By the Fourier inversion formula, whenever $F$ is
an even bounded Borel function with $\hat{F} \in L^1(\mathbb{R})$,
we can  write $F(\sqrt{L})$ in terms of
$\cos(t\sqrt{L})$. Concretely, by recalling (\ref{mL}) we have
$$
F(\sqrt{L})=(2\pi)^{-1}\int_{-\infty}^{\infty}{\hat F}(t)\cos(t\sqrt{L})\,dt,
$$
which, when combined with (\ref{es3.9}), gives

\begin{eqnarray*}
K_{F(\sqrt{L})}(x,y)=(2\pi)^{-1}\int_{|t|\geq c_0^{-1}d(x,y)}{\hat F}(t)
K_{\cos(t\sqrt{L})}(x,y)\,dt.
\end{eqnarray*}

\begin{lemma}\label{lemma2.4}  Assume $L$  is a nonnegative self-adjoint operator  on $L^2({\mathbb R^n})$ satisfying \eqref{e2.1}.  Let $\varphi\in C^{\infty}_0(\mathbb R)$ be
even, $\mbox{supp}\,\varphi \subset [-c_0^{-1}, c_0^{-1}]$, where $c_0$ is
the constant in (\ref{es3.9}). Let
$\Phi$ denote the Fourier transform of $\varphi$. Then for each
$\kappa=0,1,\dots$, and for every $t>0$, the kernel
$K_{(t^2L)^{\kappa}\Phi(t\sqrt{L})}(x,y)$ of
$(t^2L)^{\kappa}\Phi(t\sqrt{L})$ satisfies

\begin{eqnarray*}
\label{e2.4}\hspace{0.5cm} {\rm supp}\ \!
K_{(t^2L)^{\kappa}\Phi(t\sqrt{L})}  \subseteq \big\{(x,y)\in
{\Bbb R}^n\times {\Bbb R}^n: |x-y|\leq t\big\}.
\end{eqnarray*}
\end{lemma}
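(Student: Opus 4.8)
The plan is to exhibit $(t^2L)^\kappa\Phi(t\sqrt L)$ as $\psi_t(\sqrt L)$ for an explicit even multiplier $\psi_t$ and then feed $\psi_t$ into the Fourier representation of $F(\sqrt L)$ established just above, which combines the spectral calculus \eqref{mL} with the support property \eqref{es3.9} of $K_{\cos(s\sqrt L)}$. Concretely, I would put $G(\xi):=\xi^{2\kappa}\Phi(\xi)$ and $\psi_t(\xi):=G(t\xi)=(t\xi)^{2\kappa}\Phi(t\xi)$; then, since $(\sqrt L)^{2\kappa}=L^\kappa$ and $t^{2\kappa}L^\kappa=(t^2L)^\kappa$, the spectral calculus gives $\psi_t(\sqrt L)=(t^2L)^\kappa\Phi(t\sqrt L)$. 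As $\varphi\in C_0^\infty(\mathbb R)$ is even, its Fourier transform $\Phi$ is an even Schwartz function, so $G$ (and hence $\psi_t$) is even, smooth and bounded, which already places $\psi_t$ among the admissible multipliers once I also confirm $\hat\psi_t\in L^1$.

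The decisive point is the support of $\hat\psi_t$. Using the convention $\hat f(\xi)=\int f(x)e^{-ix\xi}\,dx$, multiplication by $\xi^{2\kappa}$ on the frequency side is the operator $(i\partial_x)^{2\kappa}$, so I would compute
\[
\hat G(x)=\int \xi^{2\kappa}\Phi(\xi)e^{-ix\xi}\,d\xi=(i\partial_x)^{2\kappa}\widehat{\Phi}(x)=2\pi(-1)^{\kappa}\varphi^{(2\kappa)}(x),
\]
where $\widehat{\Phi}=\widehat{\hat\varphi}=2\pi\varphi$ by Fourier inversion (with $\varphi$ even). Since every derivative $\varphi^{(2\kappa)}$ inherits the support $[-c_0^{-1},c_0^{-1}]$ of $\varphi$, the function $\hat G$ is smooth, compactly supported in $[-c_0^{-1},c_0^{-1}]$, and in particular lies in $L^1$. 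The scaling identity $\hat\psi_t(s)=t^{-1}\hat G(s/t)$ then shows $\hat\psi_t\in L^1(\mathbb R)$ with $\mathrm{supp}\,\hat\psi_t\subseteq\{|s|\le c_0^{-1}t\}$.

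Finally I would apply the representation formula with $F=\psi_t$, namely
\[
K_{\psi_t(\sqrt L)}(x,y)=(2\pi)^{-1}\int_{|s|\ge c_0^{-1}|x-y|}\hat\psi_t(s)\,K_{\cos(s\sqrt L)}(x,y)\,ds,
\]
which is licensed by Proposition~\ref{prop3.3}: \eqref{e2.1} forces finite speed of propagation and hence \eqref{es3.9}. If $|x-y|>t$, then $c_0^{-1}|x-y|>c_0^{-1}t$, so the range of integration $\{|s|\ge c_0^{-1}|x-y|\}$ is disjoint from $\mathrm{supp}\,\hat\psi_t\subseteq\{|s|\le c_0^{-1}t\}$; the integrand vanishes identically and therefore $K_{(t^2L)^\kappa\Phi(t\sqrt L)}(x,y)=0$. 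This yields the asserted inclusion $\mathrm{supp}\,K_{(t^2L)^\kappa\Phi(t\sqrt L)}\subseteq\{(x,y):|x-y|\le t\}$.

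I expect the only real friction to be the Fourier-analytic bookkeeping: tracking the convention-dependent constants, justifying the passage $\xi^{2\kappa}\mapsto(i\partial_x)^{2\kappa}$ under the integral (immediate from the rapid decay of $\Phi$), and confirming the integrability $\hat\psi_t\in L^1$ needed to legitimize the representation formula. Once $\mathrm{supp}\,\hat\psi_t$ is pinned down, the geometric disjointness argument that produces the support bound is essentially automatic.
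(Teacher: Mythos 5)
Your argument is correct and is essentially the paper's own approach: the paper simply outsources the proof to \cite[Lemma 3.5]{HLMMY}, whose argument is exactly the one you reconstruct from Proposition~\ref{prop3.3}, the support property \eqref{es3.9}, and the $\cos(t\sqrt L)$-representation formula displayed just before the lemma. The Fourier bookkeeping (in particular $\widehat{\Phi}=2\pi\varphi$ for even $\varphi$, the support of $\widehat{\psi_t}$ in $\{|s|\le c_0^{-1}t\}$, and the resulting disjointness for $|x-y|>t$) all checks out.
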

\begin{proof} This follows from \cite[Lemma 3.5]{HLMMY}.
\end{proof}

In what follows, let $\varphi$, $c_0$,
and $\Phi$ be as in Lemma~\ref{lemma2.4}, but with an extra assumption that
$$
\varphi \geq 0\ \ \&\ \ \varphi \geq c > 0\ \ \hbox{on}\ \ (-1/(2c_0), 1/(2c_0)).
$$
For $M\in \mathbb{N}$ set
$$
\Psi(x):=x^{2(M+1)}\Phi(x), \quad\forall \ x\in{\mathbb{R}}.
$$
Consider the operator
$\pi_{\Psi,L}: T^2_2({\mathbb R}^{n+1}_+)\rightarrow L^2({\mathbb R}^n)$, given by
\begin{eqnarray*}
\pi_{\Psi,L}(F)(x):= \int_0^{\infty}\Psi(t\sqrt L)\big(F(\cdot,\, t)\big)(x){dt\over t},
\end{eqnarray*}
where the improper integral converges weakly in $L^2$. The bound
\begin{equation}\label{e2.12}
\|\pi_{\Psi,L}F\|_{L^2} \leq C\|F\|_{T^2_2}
\end{equation}
 follows readily by duality and the $L^2$ quadratic estimate.
Moreover, we have the following analogue of the well-known argument of \cite[Theorem 6]{CMS}.

\begin{lemma}\label{le2.5} Given a nonnegative self-adjoint operator $L$ obeying \eqref{e2.1}. Suppose $A$ is an $F\dot{T}^\lambda$-atom associated to a ball
$B$.  Then there is a  constant $C$, depending only on $\Psi$,  such that $C^{-1}\, \pi_{\Psi,L} (A)$
is a $(2,M,\lambda)$-atom associated to  $2B$.
\end{lemma}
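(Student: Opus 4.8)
The plan is to exhibit the auxiliary function $b$ explicitly and then verify the three defining properties of a $(2,M,\lambda)$-atom in Definition~\ref{def 2.2}. Writing $\Psi(x)=x^{2(M+1)}\Phi(x)=x^{2M}\widetilde{\Psi}(x)$ with $\widetilde{\Psi}(x):=x^2\Phi(x)$, one has $\Psi(t\sqrt L)=(t^2L)^M\widetilde{\Psi}(t\sqrt L)$. I would therefore set
$$b:=\int_0^\infty t^{2M}\,\widetilde{\Psi}(t\sqrt L)\big(A(\cdot,t)\big)\,\frac{dt}{t},$$
so that, moving $L^M$ under the integral sign, $L^Mb=\pi_{\Psi,L}(A)$, which supplies the required factorization $a=L^Mb$ of the first bullet. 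Since an $F\dot{T}^\lambda$-atom lies in $T^2_2(\mathbb R^{n+1}_+)$ (it is supported in $T(B)$ and square-integrable against $dx\,dt/t$ there), the bound \eqref{e2.12} and its analogues for the operators below guarantee that all the integrals converge in $L^2$ and that $b\in\mathcal{D}(L^M)$.

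Next I would establish the support condition. For $0\le k\le M$ a direct computation gives
$$L^kb=\int_0^\infty t^{2(M-k)}\,(t^2L)^{k+1}\Phi(t\sqrt L)\big(A(\cdot,t)\big)\,\frac{dt}{t}.$$
By Lemma~\ref{lemma2.4} the kernel of $(t^2L)^{k+1}\Phi(t\sqrt L)$ is supported in $\{|x-y|\le t\}$, so $(t^2L)^{k+1}\Phi(t\sqrt L)(A(\cdot,t))(x)$ can be nonzero only when there exists $y$ with $|x-y|\le t$ and $(y,t)\in T(B)$. The containment $B(y,t)\subset B$ forces $|y-x_B|+t\le r_B$, whence $|x-x_B|\le|x-y|+|y-x_B|\le r_B$; thus $x\in\overline{B}\subset 2B$, and $\mathrm{supp}\,L^kb\subset 2B$ for every $k=0,\dots,M$, which is the second bullet with $B$ replaced by $2B$.

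It remains to prove the size estimate. Fix $g\in L^2(\mathbb R^n)$ with $\|g\|_{L^2}=1$. Using the self-adjointness of $(t^2L)^{k+1}\Phi(t\sqrt L)$ (recall $\Phi$ is real and even) and then the Cauchy--Schwarz inequality on $T(B)$ against $dx\,dt/t$, together with $t\le r_B$ on $T(B)$,
$$|\langle L^kb,g\rangle|\le \Big(\int_{T(B)}|A|^2\tfrac{dx\,dt}{t}\Big)^{1/2}\Big(r_B^{4(M-k)}\int_0^\infty\!\!\int_{\mathbb R^n}|(t^2L)^{k+1}\Phi(t\sqrt L)g|^2\tfrac{dx\,dt}{t}\Big)^{1/2}.$$
The first factor is at most $r_B^{-\lambda/2}$ by Definition~\ref{def 1.4}, while the $L^2$ quadratic estimate $\int_0^\infty\|(t^2L)^{k+1}\Phi(t\sqrt L)g\|_{L^2}^2\,\frac{dt}{t}\le C\|g\|_{L^2}^2$ — valid because $s\mapsto s^{2(k+1)}\Phi(s)$ is even, rapidly decaying and vanishes to high order at the origin — controls the second. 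Taking the supremum over $g$ yields $\|L^kb\|_{L^2}\le C\,r_B^{2(M-k)-\lambda/2}$. Multiplying by $r_{2B}^{2k}=(2r_B)^{2k}$ then gives $\|(r_{2B}^2L)^kb\|_{L^2}\le C\,r_{2B}^{2M-\lambda/2}$ uniformly in $0\le k\le M$, so after dividing $b$ by this finite, $\Psi$-dependent constant the third bullet holds. Consequently $C^{-1}\pi_{\Psi,L}(A)=L^M(C^{-1}b)$ is a $(2,M,\lambda)$-atom associated with $2B$.

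I expect the main obstacle to be the support localization: it is precisely here that the finite speed propagation property enters, through the kernel bound of Lemma~\ref{lemma2.4}, and one must combine it carefully with the geometry of the tent $T(B)$ to keep the support inside $2B$ for all $k$ at once. The remaining points — the factorization $a=L^Mb$, the convergence of the defining integrals, and the membership $b\in\mathcal{D}(L^M)$ — are routine once the quadratic estimate and \eqref{e2.12} are in hand.
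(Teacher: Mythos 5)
Your proposal is correct and follows essentially the same route as the paper: the same choice of $b=\int_0^\infty t^{2M}\,t^2L\Phi(t\sqrt L)(A(\cdot,t))\,\frac{dt}{t}$, the support localization via Lemma~\ref{lemma2.4} and the geometry of $T(B)$, and the size bound by dualizing against $g\in L^2$, using $t\le r_B$ on $T(B)$, Cauchy--Schwarz, the atom normalization, and the $L^2$ quadratic estimate. Your write-up is in fact a bit more explicit than the paper's on the support step and on the final renormalization to the radius $r_{2B}$, but these are cosmetic differences only.
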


\begin{proof} Fix a ball $B$ and let $A$ be an $F\dot{T}^\lambda$-atom  associated to $B$.  Thus,
$$\int_{T(B)}|A(x,t)|^2\frac{dxdt}{t}\leq |B|^{-\lambda/n}.$$
For $M\in\mathbb N$ write
$$
a:=\pi_{\Psi,L}(A)=L^M b,
$$
where
$$
b:=\int_0^{\infty}t^{2M}t^2L\Phi(-t\sqrt{L})\big(A(\cdot,\, t)\big){dt\over t}.
$$
Observe that the functions $L^k b,\, k=0,1,..., M,$ are supported on the ball $2B$, by
Lemma~\ref{lemma2.4}, since $A$ is supported in $T(B)$.
Consider
some $g\in L^2(2B)$ such that $\|g\|_{L^2(2B)}=1$.
Then for every $k=0,1,\dots,M $ we have

\begin{align*}
&\Big|\int_{{\mathbb R}^n}(r_B^2L)^{k}b(x)\,g(x)dx\Big|\nonumber\\
&=\Big|\lim_{\delta\to 0}\int_{{\mathbb R}^n}\left(\int_\delta^{1/\delta}t^{2M}r_B^{2k}  L^{k}t^2L \Phi(t\sqrt{L})
\big(A(\cdot,\, t)\big)(x)
{dt\over t}\right)g(x)\,dx \Big|\nonumber \\
&=\Big|\int_{T(B)} A(x,t)  t^{2M}r_B^{2k} L^{k}t^2L\Phi(t\sqrt{L})g(x)
{dxdt\over t} \Big|\\
&\leq  r_B^{2M}\Big(
\int_{ T(B)  }\big|  A(x,t)
\big|^2{dxdt\over t}\Big)^{1/2} \Big(
\int_{ T(B)  }\big|  (t^2L)^{k+1}\Phi(t\sqrt{L})g(x)
\big|^2{dxdt\over t}\Big)^{1/2} \nonumber \\
&\leq Cr_B^{2M}|B|^{-\lambda/2n} \|g\|_{L^2(2B)},\nonumber
\end{align*}
where the fact that
$A$ is an $F\dot{T}^\lambda$-atom supported in $T(B)$ (hence, $0<t<r_B$) has been used, and  the last inequality follows from the $L^2$ quadratic estimate:
$$
\Big(
\int_0^{\infty}\int_{\mathbb R^n} \big|  (t^2L)^{k+1}\Phi(t\sqrt{L})g(x)
\big|^2{dxdt\over t}\Big)^{1/2}  \\[4pt]
 \leq C \|g\|_{L^2({\mathbb R^n})}.
$$
As a consequence, one gets
\begin{eqnarray*}
\|(r_B^2L)^{k}b\|_{L^2(2B)}
&\leq& Cr_B^{2M}|B|^{-\lambda/2n},\quad k=0,1,...,M.
\end{eqnarray*}
The proof is complete.
\end{proof}

\begin{theorem}\label{th2.7} Assume $L$  is   a nonnegative self-adjoint operator   on $L^2({\mathbb R^n})$ satisfying \eqref{e2.1}. Then
$$
F\dot{H}_{L,at,M}^{\lambda}({\mathbb R}^{n})= F\dot{H}_{L}^{\lambda}({\mathbb R}^{n}).
$$
Moreover
$$\|f\|_{F\dot{H}_{L,at,M}^{\lambda}} \approx \|f\|_{F\dot{H}_{L}^{\lambda}},$$
where the implicit constants depend only on the pair $(M,\lambda)\in\mathbb N\times(0,n)$ and the constant in \eqref{e2.1}.
\end{theorem}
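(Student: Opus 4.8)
The plan is to establish the two norm‑controlled inclusions that together give the asserted identity and then pass to completions. Working on the common dense subclass $\{f\in L^2(\mathbb R^n):\|f\|_{F\dot H_L^\lambda}<\infty\}$, I will prove (A) every such $f$ admits an atomic $(2,M,\lambda)$‑representation with $\|f\|_{F\dot H_{L,at,M}^\lambda}\lesssim\|f\|_{F\dot H_L^\lambda}$, and (B) every $(2,M,\lambda)$‑atom $a$ satisfies $\|a\|_{F\dot H_L^\lambda}\le C$ uniformly, whence $\|f\|_{F\dot H_L^\lambda}\lesssim\|f\|_{F\dot H_{L,at,M}^\lambda}$. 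Direction (A) rests on the tent‑space atomic decomposition (Theorem \ref{th1.7}), the synthesis map $\pi_{\Psi,L}$, and a Calder\'on reproducing formula; direction (B) rests on a uniform single‑atom estimate together with the quasi‑triangle inequality (Lemma \ref{le1.6}).

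For (A), fix $f\in L^2$ with $\|f\|_{F\dot H_L^\lambda}<\infty$; since $t^{2}Le^{-t^{2}L}$ annihilates the null space of $L$, we may assume $f\in\overline{\mathcal R(L)}$. Put $F(x,t):=t^{2}Le^{-t^{2}L}f(x)$, so that $F\in F\dot T^\lambda\cap T^2_2$, the membership in $T^2_2$ being the $L^2$ quadratic estimate and $\|F\|_{F\dot T^\lambda}=\|f\|_{F\dot H_L^\lambda}$. By Theorem \ref{th1.7}(i) write $F=\sum_j\lambda_jA_j$ with $F\dot T^\lambda$‑atoms $A_j$ and $\sum_j|\lambda_j|\lesssim\|f\|_{F\dot H_L^\lambda}$, the series converging also in $T^2_2$ by Theorem \ref{th1.7}(ii). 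Applying the bounded map $\pi_{\Psi,L}\colon T^2_2\to L^2$ from \eqref{e2.12} termwise and invoking the spectral reproducing identity
\[
\pi_{\Psi,L}\big(t^{2}Le^{-t^{2}L}f\big)=c_{\Psi}\,f
\quad\text{on }\ \overline{\mathcal R(L)},\qquad
c_{\Psi}:=\int_{0}^{\infty}s^{2(M+2)}\Phi(s)e^{-s^{2}}\,\frac{ds}{s},
\]
I obtain $c_{\Psi}f=\sum_j\lambda_j\,\pi_{\Psi,L}(A_j)$ with $L^2$‑convergence. By Lemma \ref{le2.5} each $C^{-1}\pi_{\Psi,L}(A_j)$ is a $(2,M,\lambda)$‑atom, so $f=\sum_j(C\lambda_j/c_{\Psi})\big[C^{-1}\pi_{\Psi,L}(A_j)\big]$ is a genuine atomic representation and $\|f\|_{F\dot H_{L,at,M}^\lambda}\lesssim\sum_j|\lambda_j|\lesssim\|f\|_{F\dot H_L^\lambda}$. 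Two points need care: the nonvanishing of $c_{\Psi}$, which is precisely what the sign normalization $\varphi\ge0$, $\varphi\ge c>0$ near the origin secures (cf. \cite{HLMMY}), and the termwise application of $\pi_{\Psi,L}$, legitimized by the $T^2_2$‑convergence in Theorem \ref{th1.7}(ii) together with the bound \eqref{e2.12}.

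For (B), let $a=L^Mb$ be a $(2,M,\lambda)$‑atom on a ball $B$ and estimate $\|a\|_{F\dot H_L^\lambda}=\|t^{2}Le^{-t^{2}L}a\|_{F\dot T^\lambda}$. Writing $t^{2}Le^{-t^{2}L}a=t^{-2M}(t^{2}L)^{M+1}e^{-t^{2}L}b$, I decompose $\mathbb R^{n+1}_+$ into the Carleson box over a fixed dilate of $B$ and complementary pieces $R_k$ graded by the spatial annuli $2^{k+1}B\setminus 2^kB$ and by the dyadic heights $t\sim 2^kr_B$, and show that each restriction $(t^{2}Le^{-t^{2}L}a)\mathbf{1}_{R_k}$ equals $c_k$ times an $F\dot T^\lambda$‑atom supported in a tent $T(2^{k+2}B)$, with $\sum_k c_k\le C$; Lemma \ref{le1.6} then yields $\|t^{2}Le^{-t^{2}L}a\|_{F\dot T^\lambda}\lesssim\sum_k c_k\le C$. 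The near piece is controlled by the global quadratic estimate together with $\|a\|_{L^2}\le r_B^{-\lambda/2}$, giving $c_0\lesssim 1$; the low‑height spatial tails decay super‑exponentially by the Gaussian bound \eqref{e2.1}; and for the high pieces the functional‑calculus bound $\|(t^{2}L)^{M+1}e^{-t^{2}L}\|_{L^2\to L^2}\lesssim 1$ and $\|b\|_{L^2}\le r_B^{2M-\lambda/2}$ give $\|t^{2}Le^{-t^{2}L}a\|_{L^2}\lesssim (t/r_B)^{-2M}r_B^{-\lambda/2}$. With this input one computes $c_k\sim 2^{k(\lambda/2-2M)}$ for the height‑$2^kr_B$ slab. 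Once (B) is in hand, for a representation $f=\sum_j\lambda_ja_j$ (convergent in $L^2$) one applies $t^{2}Le^{-t^{2}L}$, notes that $\sum_j\lambda_j\,t^{2}Le^{-t^{2}L}a_j\to t^{2}Le^{-t^{2}L}f$ in $T^2_2$, and invokes Lemma \ref{le1.6} to get $\|f\|_{F\dot H_L^\lambda}\lesssim\sum_j|\lambda_j|\|a_j\|_{F\dot H_L^\lambda}\lesssim\sum_j|\lambda_j|$; taking the infimum and combining with (A) gives the norm equivalence on the dense subclass, hence on the completions.

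The main obstacle is the uniform single‑atom estimate in (B): matching the Gaussian and $t^{-2M}$ decay against the capacity normalization $r_{2^kB}^{-\lambda}=(2^kr_B)^{-\lambda}$ that is forced by the Choquet side condition \eqref{e1.2} is the delicate bookkeeping, since, unlike the classical $H^1$ theory, the admissible weights are tied to the Hausdorff capacity and the annular pieces must be reorganized into genuine tents $T(2^{k+2}B)$. The geometric series $\sum_k c_k\sim\sum_k 2^{k(\lambda/2-2M)}$ converges exactly when $M>\lambda/4$, so this is the only place where a lower bound on the order of vanishing $M$ enters; everything else — the reproducing formula, the interchange of $\pi_{\Psi,L}$ with the atomic sum, and the final passage to completions — is routine once Theorem \ref{th1.7} and Lemmas \ref{le2.5} and \ref{le1.6} are granted.
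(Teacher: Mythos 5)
Your direction (A) --- decomposing $F(x,t)=t^{2}Le^{-t^{2}L}f$ into $F\dot T^\lambda$-atoms via Theorem \ref{th1.7}, synthesizing with $\pi_{\Psi,L}$, invoking the Calder\'on reproducing formula and Lemma \ref{le2.5} --- is exactly the paper's argument for the inclusion $F\dot{H}_{L}^{\lambda}({\mathbb R}^{n})\cap L^2({\mathbb R}^n)\subseteq\HAL$, including the two points of care you flag. Direction (B) is where you genuinely diverge: the paper does not decompose $Q_{t^{2}}a$ into tent-space atoms, but instead exhibits one explicit admissible weight $\widetilde{\omega}(x,t)=kr_{B}^{-\lambda}\min\{1,(r_{B}/\sqrt{|x-x_{B}|^{2}+t^{2}})^{\lambda+\delta}\}$ satisfying \eqref{e1.2} and bounds $\int|Q_{t^{2}}a|^{2}\widetilde{\omega}^{-1}\,\frac{dxdt}{t}$ directly over $T(2B)$ and the shells $T(2^{j}B)\setminus T(2^{j-1}B)$. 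Your tent-atom route is a legitimate alternative in structure (it is the converse half of Theorem \ref{th1.7}(i) plus Lemma \ref{le1.6}), and both routes ultimately hinge on the same core integral estimate over $T(2^{j}B)\setminus T(2^{j-1}B)$.

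The problem is your quantitative input for the far region. First, a height slab $\{t\sim 2^{k}r_{B}\}$ is not supported in $T(2^{k+2}B)$ --- it extends over all of ${\mathbb R}^n$ --- so the global bound $\|t^{2}Le^{-t^{2}L}a\|_{L^2}\lesssim (t/r_B)^{-2M}r_B^{-\lambda/2}$ does not by itself produce an atom adapted to a single tent; further spatial truncation is still required. Second, and more seriously, your coefficients $c_k\sim 2^{k(\lambda/2-2M)}$ sum only when $M>\lambda/4$, whereas the theorem is asserted for every $M\in{\mathbb N}$ and every $\lambda\in(0,n)$; your argument therefore fails, e.g., when $n>4$, $\lambda\in[4,n)$ and $M=1$. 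This restriction is an artifact of discarding spatial localization in favor of the $(t/r_B)^{-2M}$ decay. The pointwise Gaussian bound \eqref{e2.2} gives $|Q_{t^{2}}a(x)|\lesssim\int\frac{t\,|a(y)|}{(t+|x-y|)^{n+1}}\,dy$ with $t+|x-y|\gtrsim 2^{j}r_{B}$ on $T(2^{j}B)\setminus T(2^{j-1}B)$, whence $\int_{T(2^{j}B)\setminus T(2^{j-1}B)}|Q_{t^{2}}a|^{2}\,\frac{dxdt}{t}\lesssim 2^{-jn}r_{B}^{-\lambda}$ and hence $c_{j}\lesssim 2^{-j(n-\lambda)/2}$, summable for all $M\ge 1$ with no use of $M$-th order decay; this is precisely the estimate the paper runs (tested against $\widetilde{\omega}$ rather than against tent atoms). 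With that substitution your direction (B) closes; as written, it does not establish the theorem in the stated generality.
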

\begin{proof} On the one hand, we show
$$
\HAL \subseteq (L^2({\mathbb R}^n)\cap F\dot{H}_{L}^{\lambda}({\mathbb R}^{n})).
$$

Note that $\HAL \subseteq L^2({\mathbb R}^n)$. Indeed, by definition, a $(2,M,\lambda)$-atom belongs to
$R(L)$, and therefore so does any finite linear combination of atoms. Moreover,  every $f\in\HAL$ is an $L^2$ limit of such a finite linear combination. Meanwhile, we are required to verify $\HAL  \subseteq F\dot{H}_{L}^{\lambda}$. To do so, let
$f \in \HAL$, where $f = \sum \lambda_j a_j$ is an atomic $(2,M,\lambda)$-representation such that
$$
\|f\|_{\mathbb{F}\dot{\mathbb{H}}_{L,at}^{\lambda}({\mathbb R}^n)} \approx  \sum_{j=0}^\infty |\lambda_j| .
$$
Since the sum converges in $L^2$ (by Definition \ref{def 2.2}), and an application of the $L^{2}$ boundedness of $Q_{t^{2}}$, we have that
$$
|Q_{t^{2}}(f)| \leq \sum_{j=0}^\infty |\lambda_j| \,|Q_{t^{2}}(a_j)|.
$$
By Lemma~\ref{le1.6}, it will be enough to show that for
every $(2,M,\lambda)$-atom $a$ associated to a ball $B=B(x_{B},r_{B})$, we have
$\|Q_{t^{2}}(a)\|_{F\dot{T}^\lambda}\leq C$. Now, fix $\delta=(n-\lambda)/2>0$ and let
\begin{eqnarray}\label{e1.11}
\widetilde{\omega}(x,t)=kr_{B}^{-\lambda}\min\bigg\{1,\Big(\frac{r_{B}}{\sqrt{|x-x_{B}|^{2}+t^{2}}}\Big)^{\lambda+\delta}\bigg\},
\end{eqnarray}
where $k$ will be chosen below.  Since for $x\in {\mathbb R}^n$, the distance in ${\mathbb R}^{n+1}_{+}$
 from the $\Gamma(x)$ to $(x_{B},0)$ is $|x-x_{B}|/\sqrt{2}$, the nontangential maximal function
  of $\widetilde{\omega}$ is bounded by $$N\widetilde{\omega}(x)\leq
  kr_{B}^{-\lambda}\min\bigg\{1,\Big(\frac{\sqrt{2}r_{B}}{|x-x_{B}|}\Big)^{\lambda+\delta}\bigg\}$$
and so
\begin{eqnarray*}
 k^{-1}\int_{{\mathbb R}^n}N\widetilde{\omega} d\Lambda_{\lambda}^{(\infty)}
&\leq&\int_{{\mathbb R}^n}r_{B}^{-\lambda}\min\{1,(\frac{\sqrt{2}r_{B}}
{|x-x_{B}|})^{\lambda+\delta}\}\ d\Lambda_{\lambda}^{(\infty)}\nonumber\\
&=&\int_{0}^{\infty}\Lambda_{\lambda}^{(\infty)}\{x\in{\mathbb R}^n:r_{B}^{-\lambda}
\min\{1,(\frac{\sqrt{2}r_{B}}{|x-x_{B}|})^{\lambda+\delta}\}>\alpha\}\ d\alpha\nonumber\\
&\leq&\int_{0}^{r_{B}^{-\lambda}}\Lambda_{\lambda}^{(\infty)}\{B(x_{B},
(\alpha r_{B}^{\lambda})^{\frac{-1}{\lambda+\delta}}\sqrt{2}r_{B})\})\ d\alpha=C.
\end{eqnarray*}
Upon choosing $k=C^{-1}$ to make $\widetilde{\omega}$ satisfy (\ref{e1.2}), we see
\begin{eqnarray*}
\|Q_{t^{2}}(a)\|_{F\dot{T}^\lambda}^{2}&\leq &\int_{{\mathbb R}^{n+1}_{+}}
 \frac{|Q_{t^{2}}a(x)|^{2}}{\widetilde{\omega}(x,t)}\frac{dxdt}{t}\nonumber\\
&\leq&\int_{T(2B)} \frac{|Q_{t^{2}}a(x)|^{2}}{\widetilde{\omega}(x,t)}
\frac{dxdt}{t}+\sum_{j=2}^{\infty}\int_{T(2^{j}B)\setminus T(2^{j-1}B)} \frac{|Q_{t^{2}}a(x)|^{2}}{\widetilde{\omega}(x,t)}\frac{dxdt}{t}\nonumber\\
&=:&A_{0}+\sum_{j=2}^{\infty}A_{j}.
\end{eqnarray*}
Note that
$$
\widetilde{\omega}(x,t)\geq k2^{-(\lambda+\delta)} r_{B}^{-\lambda}\ \ \hbox{on}\ \
T(2B).
$$
So, by using (\ref{e2.2}) and the definition of $(2,M,\lambda)$-atom we obtain
\begin{eqnarray*}
A_{0}\leq Cr_{B}^{\lambda}\int_{{\mathbb R}^{n+1}_{+}} |Q_{t^{2}}a(x)|^{2}\ \frac{dxdt}{t}\leq Cr_{B}^{\lambda}\|a\|_{L^{2}}^{2}\leq C.
\end{eqnarray*}
\noindent
To estimate $A_{j}$ for $j=2,3, \cdots$, notice that for each $(x,t)\in T(2^{j}B)\setminus T(2^{j-1}B)$ one has
$$
\widetilde{\omega}(x,t)=Cr_{B}^{-\lambda}\Big(\frac{r_{B}}{\sqrt{|x-x_{B}|^{2}+t^{2}}}\Big)^{\lambda+\delta}
\geq Cr_{B}^{-\lambda}2^{-j(\lambda+\delta)}.
$$
Thus
\begin{eqnarray}\label{e4.1}
A_{j}\leq Cr_{B}^{\lambda}2^{j(\lambda+\delta)}\int_{T(2^{j}B)\setminus T(2^{j-1}B)} |Q_{t^{2}}a(x)|^{2}\ \frac{dxdt}{t}.
\end{eqnarray}
\noindent
 For $(x,t)\in T(2^{j}B)\setminus T(2^{j-1}B)$ and $y\in B$, one has  $t+|x-y|\geq 2^{j-1}r_{B}$. By (\ref{e2.2}),  we get
 \begin{align*}
 &\int_{T(2^{j}B)\setminus T(2^{j-1}B)} |Q_{t^{2}}a(x)|^{2}\
 \frac{dxdt}{t}\\
 &\leq C \int_{T(2^{j}B)\setminus T(2^{j-1}B)} \bigg(\int \frac{t \ |a(y)|}{(t+|x-y|)^{n+1}}dy\bigg)^{2}\ \frac{dxdt}{t}\nonumber\\
 &\leq C\frac{1}{(2^{j}r_{B})^{2n+2}}\|a\|_{L^{1}}^{2} \int_{T(2^{j}B)\setminus T(2^{j-1}B)}t \ {dxdt}\nonumber\\
 &\leq C\frac{1}{(2^{j}r_{B})^{2n+2}}\|a\|_{L^{2}}^{2}|B|(2^{j}r_{B})^{2}|2^{j}B|\nonumber\\
&\leq C{2^{-jn}}{r_{B}^{-\lambda}},
 \end{align*}
 which, combining with (\ref{e4.1}), implies
  \begin{eqnarray*}
 A_{j}\leq C2^{-j(n-\lambda-\delta)}=C2^{-j(n-\lambda)/2},
 \end{eqnarray*}
thereby deriving $\|a\|_{F\dot{H}_L^\lambda}\leq C$.

On the other hand, we verify the reverse inequality
$$
F\dot{H}_{L}^{\lambda}({\mathbb R}^{n})\cap L^2({\mathbb R}^n)\subseteq\HAL.
$$
Let
$$
f\in F\dot{H}_{L}^{\lambda}({\mathbb R}^{n})\cap L^2({\mathbb R}^n)\ \ \&\ \
F(\cdot,t):= t^2Le^{-t^2L}f(\cdot).
$$
Note that $F\in F\dot{T}^\lambda\cap T_{2}^{2}$ follows from the definition of $F\dot{H}^\lambda_L$.
So, by Theorem \ref{th1.7} we have
\begin{eqnarray*}
F =\sum_j \lambda_j \, A_j,
\end{eqnarray*}
where each $A_j$ is a $F\dot{T}^\lambda$-atom,
the sum converges in both $T^2_2({\mathbb R}^{n+1}_+)$ and $F\dot{T}^\lambda({\mathbb R}^{n+1}_+)$,
and
\begin{equation}\label{e2.17}\sum\limits_j|\lambda_j|\leq C\|F\|_{F\dot{T}}=C\|f\|_{F\dot{H}_{L}^{\lambda}}.
\end{equation}
Also, by $L^2$-functional
calculus (\cite{Mc}),  we have the ``Calder\'{o}n reproducing formula"
\begin{multline}\label{e2.18}
f(x)=c_{\Psi}\int_0^{\infty}\Psi(t\sqrt L)(t^{2}L e^{-t^2{L}}f)(x) {dt\over t}
=c_\Psi \,\pi_{\Psi,L}(F) = c_\Psi \,\sum_j \lambda_j\, \pi_{\Psi,L}(A_j).
\end{multline}

\noindent where the last sum  converges in $L^2({\mathbb R}^n)$ by (\ref{e2.12}).
Moreover, by Lemma \ref{le2.5} we have that up to multiplication by some
harmless constant $C$, each $a_j := c_\Psi\, \pi_{\Psi,L}(A_j)$
is  a $(2,M,\lambda)$-atom.
Consequently, the last sum in (\ref{e2.18}) is an atomic $(2,M,\lambda)$-representation,
so that $f\in \HAL$, and by (\ref{e2.17}) we have
$$\|f\|_{\HAL}\leq C\|f\|_{F\dot{H}_{L}^{\lambda}},$$
whence deriving the desired inclusion.

The above argument shows that both ${F}\dot{{H}}_{L,at,M}^{\lambda}({\mathbb R}^n)$ and $F\dot{H}_{L}^{\lambda}({\mathbb R}^{n})$ have the same dense subset $\HAL=L^2({\mathbb R}^{n})\cap F\dot{H}_{L}^{\lambda}({\mathbb R}^{n})$ with equivalent norms, and hence they must coincide. This completes the proof.
\end{proof}

\subsection {Relationships between atoms and molecules} In sake of convenience, given a ball $B$ set
\begin{equation}\label{e2.9}\hspace{1cm}
 U_{0}(B)=B, B_{i}=2^{i}B, U_{i}(B)=2^{i}B\setminus2^{i-1}B, \ \ i=1,2,\cdots.
\end{equation}

\begin{definition} \label{def 2.7} Let $\epsilon>0$.  A function $m\in L^2({\mathbb R}^n)$ is called a
$(2,M,\lambda,\epsilon)$-molecule associated to $L$ if there exist a function
$b\in {\mathcal D}(L^M)$ and a ball $B$ such that
\begin{itemize}

\item $m=L^M b$;

\item For every $k=0,1,2,\dots,M$ and $j=0,1,2,\dots$, there holds
$$
\|(r_B^2L)^{k}b\|_{L^2(U_j(B))}\leq r_B^{2M} 2^{-j\epsilon}
(2^jr_{B})^{-\lambda/2},
$$
where the annuli $U_j(B)$ have been defined in (\ref{e2.9}).
\end{itemize}
\end{definition}

\begin{definition}\label{def 2.8} Given $ M\geq 1, \lambda\in (0,n)$ and $\epsilon > 0$.
We say that
$\sum\lambda_j m_j$ is a molecular
$(2,M,\lambda,\epsilon)$-representation of $f$ provided that $ \{\lambda_j\}_{j=0}^{\infty}\in {\ell}^1$,
each $m_j$ is a $(2,M,\lambda,\epsilon)$-molecule, and the sum converges in $L^2({\mathbb R}^{n}).$
Set
\begin{equation*}
\HML= \\\Big\{f:
\mbox{ f has a molecular $(2,M,\lambda,\epsilon)$-representation} \Big\},
\end{equation*}
with the norm given by
\begin{align*}
&\|f\|_{\HML}\\
&={\rm inf}\Big\{\sum_{j=0}^{\infty}|\lambda_j|:
f=\sum\limits_{j=0}^{\infty}\lambda_jm_j\, \mbox{ is a molecular $(2,M,\lambda,\epsilon)$-representation} \Big\}.
\end{align*}
The space $F\dot{H}_{L,mol,M,\epsilon}^{\lambda}({\mathbb R}^{n})$ is then defined as the completion of
 $\mathbb{F}\dot{\mathbb{H}}_{L,mol,M,\epsilon}^{\lambda}({\mathbb R}^n)$ with respect to this norm.
\end{definition}

\begin{lemma}\label{th2.9} Assume $L$  is   a nonnegative self-adjoint operator   on $L^2({\mathbb R^n})$ satisfying \eqref{e2.1}.  Let $\epsilon>0$. Then
$$\HML \subseteq F\dot{H}_{L}^{\lambda}({\mathbb R}^{n})\cap L^2({\mathbb R}^n)$$
and
$$\|f\|_{F\dot{H}_{L}^{\lambda}({\mathbb R}^{n})}\leq C \|f\|_{\HML}.$$
\end{lemma}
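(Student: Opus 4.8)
The plan is to reduce, \emph{via} the quasi-triangle inequality of Lemma~\ref{le1.6}, the whole assertion to one uniform estimate for a single molecule. First I would check that every $(2,M,\lambda,\epsilon)$-molecule $m=L^{M}b$ belongs to $L^{2}(\mathbb{R}^{n})$: writing $m$ over the annuli $U_{j}(B)$ of \eqref{e2.9} and using the case $k=M$ of Definition~\ref{def 2.7}, one has $\|m\|_{L^{2}(U_{j}(B))}=r_{B}^{-2M}\|(r_{B}^{2}L)^{M}b\|_{L^{2}(U_{j}(B))}\leq 2^{-j\epsilon}(2^{j}r_{B})^{-\lambda/2}$, whence $\|m\|_{L^{2}}^{2}\leq r_{B}^{-\lambda}\sum_{j}2^{-j(2\epsilon+\lambda)}<\infty$. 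Since a molecular $(2,M,\lambda,\epsilon)$-representation converges in $L^{2}$ by Definition~\ref{def 2.8}, this already gives $\HML\subseteq L^{2}(\mathbb{R}^{n})$. Because $\|f\|_{F\dot{H}_{L}^{\lambda}}=\|Q_{t^{2}}f\|_{F\dot{T}^{\lambda}}$ and $Q_{t^{2}}$ is $L^{2}$-bounded, Lemma~\ref{le1.6} reduces the norm inequality to producing a constant $C$, independent of the molecule, such that $\|Q_{t^{2}}m\|_{F\dot{T}^{\lambda}}\leq C$ for every $(2,M,\lambda,\epsilon)$-molecule $m$ associated with a ball $B=B(x_{B},r_{B})$.

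To bound $\|Q_{t^{2}}m\|_{F\dot{T}^{\lambda}}$ I would test against the weight used in the proof of Theorem~\ref{th2.7}, namely $\widetilde{\omega}$ from \eqref{e1.11} centred at $x_{B}$, but now with an exponent $\delta\in(0,\min\{2\epsilon,\,n-\lambda\})$; the computation after \eqref{e1.11} shows that every such $\delta>0$ keeps $\widetilde{\omega}$ admissible for \eqref{e1.2}. Splitting $\mathbb{R}^{n+1}_{+}$ into the tent $T(2B)$ and the tent-annuli $T(2^{j}B)\setminus T(2^{j-1}B)$, $j\geq 2$, and using the lower bound $\widetilde{\omega}(x,t)\geq c\,r_{B}^{-\lambda}2^{-j(\lambda+\delta)}$ on the $j$-th region, the problem becomes to show
$$
A_{j}:=\int_{T(2^{j}B)\setminus T(2^{j-1}B)}|Q_{t^{2}}m(x)|^{2}\,\frac{dxdt}{t}\leq C\,2^{-j(\lambda+\delta+\eta)}r_{B}^{-\lambda}
$$
for some $\eta>0$, together with the easier estimate on $T(2B)$, which is controlled by $C\,r_{B}^{\lambda}\|m\|_{L^{2}}^{2}\leq C$ after discarding the tent and invoking the $L^{2}$ quadratic estimate $\int_{0}^{\infty}\|Q_{t^{2}}g\|_{L^{2}}^{2}\,t^{-1}dt\leq C\|g\|_{L^{2}}^{2}$.

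The core is the estimate of $A_{j}$, for which I would insert the spatial decomposition $m=\sum_{i}m\,\chi_{U_{i}(B)}$ and apply Minkowski's inequality in $L^{2}\big(T(2^{j}B)\setminus T(2^{j-1}B),\,t^{-1}dxdt\big)$. For the diagonal band $|i-j|\leq 2$ I would drop the tent restriction and use the $L^{2}$ quadratic estimate, so these terms contribute at most $C\,2^{-i\epsilon}(2^{i}r_{B})^{-\lambda/2}\approx 2^{-j\epsilon}(2^{j}r_{B})^{-\lambda/2}$; it is here that the choice $\delta<2\epsilon$ renders the resulting bound $2^{-j(2\epsilon-\delta)}$ summable. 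For the terms with $i$ much larger than $j$ (molecular mass far outside the tent) the Gaussian bound \eqref{e2.2} on $q_{t^{2}}$ forces $|x-y|\geq c\,2^{i}r_{B}$, which exceeds $t$, and produces super-polynomial decay in $2^{\,i-j}$. The delicate terms are those with $i$ much smaller than $j$ but with $t$ comparable to $2^{j}r_{B}$, where \eqref{e2.2} yields no spatial gain; there I would abandon the direct form and use the cancellation identity $Q_{t^{2}}m=t^{-2M}(t^{2}L)^{M+1}e^{-t^{2}L}b$ together with the Gaussian bound on the kernel of $(t^{2}L)^{M+1}e^{-t^{2}L}$ (the analogue of \eqref{e2.2}) to extract the scale factor $(r_{B}/t)^{2M}$, after which summing the molecular norms $\sum_{i}2^{-i\epsilon}(2^{i}r_{B})^{-\lambda/2}$ over the inner annuli leaves a bound decaying like $2^{-2jM}$.

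The main obstacle is exactly this bookkeeping. One must split the $t$-integral over each tent-annulus at the scale $r_{B}$, using the direct form of $Q_{t^{2}}m$ for $t\leq r_{B}$ (where the factor $t^{-2M}$ is unavailable and the Gaussian of \eqref{e2.2} supplies the spatial decay) and the cancellation form for larger $t$, and then verify that the three sources of decay---the molecular decay $2^{-i\epsilon}$, the Gaussian decay in $|i-j|$, and the scale decay $(r_{B}/t)^{2M}$ coming from the $M$-fold cancellation---combine to beat the growth $2^{j(\lambda+\delta)}$ of $\widetilde{\omega}^{-1}$. Choosing $\delta<2\epsilon$ disposes of the diagonal band, while taking the cancellation order $M$ large enough relative to $n$ and $\lambda$ disposes of the large-$t$ inner terms; together they give $A_{j}\leq C\,2^{-j\eta}r_{B}^{-\lambda}$ with $\eta>0$. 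Summing in $j$ and adding the $T(2B)$ estimate yields $\|Q_{t^{2}}m\|_{F\dot{T}^{\lambda}}\leq C$; Lemma~\ref{le1.6} then upgrades this to $\|f\|_{F\dot{H}_{L}^{\lambda}}\leq C\|f\|_{\HML}$ for all $f\in\HML$, completing the embedding $\HML\subseteq F\dot{H}_{L}^{\lambda}(\mathbb{R}^{n})\cap L^{2}(\mathbb{R}^{n})$.
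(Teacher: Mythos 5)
Your overall strategy coincides with the paper's up to the key reduction: both arguments pass to a single molecule via Lemma~\ref{le1.6} and the $L^2$-boundedness of $Q_{t^2}$, test against the same weight $\widetilde{\omega}$ of \eqref{e1.11} with $\delta<\min\{2\epsilon,n-\lambda\}$, decompose $m$ over the annuli $U_i(B)$, and handle the near region by the $L^2$ quadratic estimate and the far region by kernel decay. Where you genuinely diverge is in the regime where the molecular mass lies deep inside a large tent-annulus and $t$ is comparable to the width of that annulus: there you claim that \eqref{e2.2} ``yields no spatial gain'' and therefore invoke the cancellation $Q_{t^2}m=t^{-2M}(t^2L)^{M+1}e^{-t^2L}b$ to extract $(r_B/t)^{2M}$. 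The paper never uses the cancellation structure $m=L^Mb$ at all; it only uses the size bounds $\|m\|_{L^2(U_i(B))}\leq 2^{-i\epsilon}(2^i r_B)^{-\lambda/2}$ (the case $k=M$ of Definition~\ref{def 2.7}). The reason this suffices is that \eqref{e2.2} in the form $|q_{t^2}(x,y)|\leq C\,t\,(t+|x-y|)^{-(n+1)}$ does decay in $t$ for large $t$, so on $T(2^{j}B_i)\setminus T(2^{j-1}B_i)$ one gets the factor $(2^{i+j}r_B)^{-n}$ from size alone, and this beats the weight's growth $2^{(i+j)(\lambda+\delta)}$ precisely because $\lambda+\delta<n$ --- see \eqref{e2.20}. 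So your premise that cancellation is forced in that regime is not accurate; it is the essential simplification available when $\lambda\in(0,n)$, as opposed to the Hardy-space case $\lambda=0$ where molecular cancellation is indispensable. Your route should still close (one must apply the cancellation to the localized pieces $b\chi_{U_i(B)}$ rather than to $m\chi_{U_i(B)}$, using the $k=0$ molecular bounds on $b$ and a Gaussian bound for the kernel of $(t^2L)^{M+1}e^{-t^2L}$), but it is over-engineered, and the ``bookkeeping'' you defer --- splitting the $t$-integral at $r_B$ and balancing the three decay sources --- is exactly the part your proposal leaves unexecuted; the paper's organization (tent-annuli centered on the dilated balls $B_i=2^iB$ rather than on $B$) avoids it entirely.
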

\begin{proof} By Lemma \ref{le1.6} and Definition \ref{def 2.1}, and an application of the $L^{2}$
boundedness of $Q_{t^{2}}$, we see that it will be enough to show that for
every $(2,M,\lambda,\epsilon)$-molecule $m$ we have
$\|Q_{t^{2}}(m)\|_{F\dot{T}^\lambda}\leq C$. To this end,
let ${\epsilon}>0$, $m$ be a $(2,M,\lambda,{\epsilon})$-molecule, adapted to the ball $B=B(x_B,r_B)$.
 Fix $\widetilde{\omega}$ as in (\ref{e1.11}) with $\delta<\min((n-\lambda), 2\epsilon)$. Recalling (\ref{e2.9}), we write
\begin{eqnarray*}
\|Q_{t^{2}}(m)\|_{F\dot{T}^\lambda}&\leq& \bigg(\int_{{\mathbb R}^{n+1}_{+}}
\frac{|Q_{t^{2}}(m\chi_{U_{i}(B)})(x)|^{2}}{\widetilde{\omega}(x,t)}\frac{dxdt}{t}\bigg)^{1/2}\nonumber\\
&\leq& \sum_{i=0}^{\infty}\bigg(\int_{{\mathbb R}^{n+1}_{+}} \frac{|Q_{t^{2}}(m\chi_{U_{i}(B)})(x)|^{2}}
{\widetilde{\omega}(x,t)}\frac{dxdt}{t}\bigg)^{1/2}\\
&=:&\sum_{i=0}^{\infty}I_{i}.
\end{eqnarray*}

\noindent For $i=0$, an argument similar to that of Theorem~\ref{th2.7} yields $I_0\leq C$. And, for $i\geq1$ one has
\begin{align*}
I_i&\leq\sum_{j=2}^{\infty}\bigg(\int_{T(2^{j}B_{i})\setminus T(2^{j-1}B_{i})}
 \frac{|Q_{t^{2}}(m\chi_{U_{i}(B)})(x)|^{2}}{\widetilde{\omega}(x,t)}\frac{dxdt}{t}\bigg)^{1/2}\nonumber\\
&+\bigg(\int_{T(2B_{i})} \frac{|Q_{t^{2}}(m\chi_{U_{i}(B)})(x)|^{2}}{\widetilde{\omega}(x,t)}\frac{dxdt}{t}\bigg)^{1/2}\nonumber\\
&=\Big(\sum_{j=2}^{\infty}I_{ij}\Big)+I_{i1}.
\end{align*}

To control $I_{ij} \ (j=2,3,\cdots)$, note that
$$
(x,t)\in T(2^{j+i}B)\setminus T(2^{j+i-1}B)\Longrightarrow\widetilde{\omega}(x,t)\geq Cr_{B}^{-\lambda}2^{-(j+i)(\lambda+\delta)}.
$$
So, a combination of (\ref{e2.2}), the definition of molecules and H\"older's inequality, produces
\begin{align}\label{e2.20}
I_{ij}^{2}&\leq C r_{B}^{\lambda}2^{(i+j)(\lambda+\delta)}
\int_{T(2^{j}B_{i})\setminus T(2^{j-1}B_{i})} {|Q_{t^{2}}(m\chi_{U_{i}(B)})(x)|^{2}}\frac{dxdt}{t}\nonumber\\
&\leq C r_{B}^{\lambda}2^{(i+j)(\lambda+\delta)}\int_{T(2^{j}B_{i})\setminus T(2^{j-1}B_{i})}
\bigg(\int \frac{t|m(y)\chi_{U_{i}(B)}(y)|}{(t+|x-y|)^{n+1}}\,dy\bigg)^{2}\frac{dxdt}{t}\nonumber\\
&\leq C r_{B}^{\lambda}2^{(i+j)(\lambda+\delta)}
\int_{T(2^{j}B_{i})}\frac{t^{2}}{(2^{i+j-2}r_{B})^{2n+2}}\frac{dxdt}{t}\|m\|_{L^{1}(U_{i}(B))}^{2}\\
&\leq C r_{B}^{\lambda}2^{(i+j)(\lambda+\delta)}
\frac{(2^{i+j}r_{B})^{2+n}}{(2^{i+j}r_{B})^{2n+2}}\|m\|_{L^{2}(U_{i}(B))}^{2}|2^{i}B|\nonumber\\
&\leq C2^{-j(n-\lambda-\delta)}2^{-i(2{\epsilon}-\delta)}.\nonumber
\end{align}

To estimate $I_{i1}$,  note that
$$
(x,t)\in T(2B_i)=T(2^{i+1}B)\Longrightarrow\widetilde{\omega}(x,t)\geq Cr_{B}^{-\lambda}2^{-i(\lambda+\delta)}.
$$
Thus, an application of the definition of molecules and (\ref{e2.2}) yields
\begin{align}\label{e2.22}
I^2_{i1}&\leq C r_{B}^{\lambda}2^{i(\lambda+\delta)}\int_{T(B_{i})} |Q_{t^{2}}(m\chi_{U_{i}(B)})(x)|^{2}\frac{dxdt}{t}\nonumber\\
&\leq Cr_{B}^{\lambda}2^{i(\lambda+\delta)}\|m\|^2_{L^{2}(U_{i}(B))}\\
&\leq C2^{-i(2\epsilon-\delta)}.\nonumber
\end{align}

Combining (\ref{e2.20}) and (\ref{e2.22}), we get  $\sum\limits^{\infty}_{i=1}\sum\limits^{\infty}_{j=1} I_{ij}\leq C$, thereby completing the proof.
\end{proof}

As an immediate consequence, we get the following result.

\begin{theorem}\label{th2.10} Suppose $(M,\lambda,\epsilon)\in{\mathbb N}\times(0, n)\times(0,\infty)$ and $L$ is a nonnegative self-adjoint operator obeying \eqref{e2.1}. There holds
$$
F\dot{H}_{L,at,M}^{\lambda}({\mathbb R}^{n})= F\dot{H}_{L,mol,M,\epsilon}^{\lambda}({\mathbb R}^{n}) = F\dot{H}_{L}^{\lambda}({\mathbb R}^{n}).
$$
Moreover
$$
\|f\|_{F\dot{H}_{L,at,M}^{\lambda}} \approx \|f\|_{F\dot{H}_{L}^{\lambda}}\approx \|f\|_{F\dot{H}_{L,mol,M,\epsilon}^{\lambda}},
$$
where the implicit constants depend only on the triple $(M,\lambda,\epsilon)$ and the constant in \eqref{e2.1}.
\end{theorem}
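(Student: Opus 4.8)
The plan is to deduce all three identifications directly from the two results already in hand, Theorem~\ref{th2.7} and Lemma~\ref{th2.9}, since the atomic space has already been pinned down. By Theorem~\ref{th2.7} we have $F\dot{H}_{L,at,M}^{\lambda}({\mathbb R}^{n})=F\dot{H}_{L}^{\lambda}({\mathbb R}^{n})$ with $\|\cdot\|_{F\dot{H}_{L,at,M}^{\lambda}}\approx\|\cdot\|_{F\dot{H}_{L}^{\lambda}}$, and its proof identifies the common dense subspace as $\HAL=L^2({\mathbb R}^{n})\cap F\dot{H}_{L}^{\lambda}({\mathbb R}^{n})$. Hence it suffices to show that the molecular representation space $\HML$ coincides with $\HAL$ as a set while carrying an equivalent norm; the equality of completions $F\dot{H}_{L,mol,M,\epsilon}^{\lambda}({\mathbb R}^{n})=F\dot{H}_{L}^{\lambda}({\mathbb R}^{n})$ then follows automatically.

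First I would record the elementary observation that every $(2,M,\lambda)$-atom is a $(2,M,\lambda,\epsilon)$-molecule for every $\epsilon>0$. Indeed, if $a=L^Mb$ is an atom associated to a ball $B$, then $b$ is supported in $U_{0}(B)=B$, so $\|(r_B^2L)^{k}b\|_{L^2(U_j(B))}=0$ for all $j\geq1$, while for $j=0$ the molecular size condition reads $r_B^{2M}2^{-0\cdot\epsilon}(2^{0}r_B)^{-\lambda/2}=r_B^{2M-\lambda/2}$, which is exactly the atomic bound $\|(r_B^2L)^{k}b\|_{L^2({\mathbb R}^n)}\le r_B^{2M-\lambda/2}$. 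Consequently every atomic $(2,M,\lambda)$-representation is in particular a molecular $(2,M,\lambda,\epsilon)$-representation, yielding the set inclusion $\HAL\subseteq\HML$ together with the norm comparison $\|f\|_{\HML}\le\|f\|_{\HAL}$.

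Next I would invoke Lemma~\ref{th2.9}, which supplies the reverse direction $\HML\subseteq F\dot{H}_{L}^{\lambda}({\mathbb R}^{n})\cap L^2({\mathbb R}^n)$ with $\|f\|_{F\dot{H}_{L}^{\lambda}}\le C\|f\|_{\HML}$. Since Theorem~\ref{th2.7} identifies $F\dot{H}_{L}^{\lambda}({\mathbb R}^{n})\cap L^2({\mathbb R}^n)$ with $\HAL$, this reads $\HML\subseteq\HAL$, so combined with the previous step we get $\HAL=\HML$ as sets. Chaining the three estimates gives, for every $f\in\HAL$,
$$
\|f\|_{\HAL}\approx\|f\|_{F\dot{H}_{L}^{\lambda}}\le C\|f\|_{\HML}\le C\|f\|_{\HAL},
$$
so the atomic, molecular and $F\dot{H}_{L}^{\lambda}$ norms are mutually equivalent on this common dense subspace, with constants controlled only by $(M,\lambda,\epsilon)$ and the constant in \eqref{e2.1}. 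Passing to completions—and using that the completion of a fixed normed set is unchanged, up to equivalent norms, when the norm is replaced by an equivalent one—I conclude $F\dot{H}_{L,at,M}^{\lambda}({\mathbb R}^{n})=F\dot{H}_{L,mol,M,\epsilon}^{\lambda}({\mathbb R}^{n})=F\dot{H}_{L}^{\lambda}({\mathbb R}^{n})$ with the asserted norm equivalences.

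There is no genuinely hard analytic step here, as all of the substance was already expended in Lemma~\ref{le2.5}, Theorem~\ref{th2.7} and Lemma~\ref{th2.9}. The only point demanding care is bookkeeping: the atom-to-molecule comparison must be carried out at the level of the dense representation spaces $\HAL$ and $\HML$ \emph{before} passing to completions, so that one never conflates a space with its completion, and the sole nontrivial verification is the exact matching of exponents in the $j=0$ case of the atom-is-a-molecule check, which is immediate.
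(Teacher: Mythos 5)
Your argument is correct and is essentially the paper's own proof: both deduce the result from Theorem~\ref{th2.7} and Lemma~\ref{th2.9} by noting that every $(2,M,\lambda)$-atom is a $(2,M,\lambda,\epsilon)$-molecule, chaining the norm inequalities on the common dense subspace $\HAL=\HML=L^2({\mathbb R}^n)\cap F\dot{H}_{L}^{\lambda}({\mathbb R}^n)$, and then passing to completions. Your explicit verification of the $j=0$ exponent matching and the remark about working at the level of representation spaces before completing are just slightly more detailed bookkeeping than the paper records.
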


\begin{proof} We have already shown that
$$
F\dot{H}_{L,at,M}^{\lambda}({\mathbb R}^{n}) = F\dot{H}_{L}^{\lambda}({\mathbb R}^{n})\ \ \&\ \
\HAL = F\dot{H}_{L}^{\lambda}({\mathbb R}^{n}) \cap L^2({\mathbb R}^n),
$$
with equivalent norms.  Moreover,
every $(2,M,\lambda)$-atom  is, in particular, a $(2,M,\lambda,\epsilon)$-molecule for
every $\epsilon >0$, hence
$$
\HAL \subseteq \HML\ \
$$
with
$$
\|f\|_{\HML} \leq \|f\|_{\HAL}\quad \forall\ \ f \in \HAL.
$$
Also, by Lemma \ref{th2.9} one has
$$
\HML \subseteq F\dot{H}_{L}^{\lambda}({\mathbb R}^{n}) \cap L^2({\mathbb R}^n)=\HAL
$$
with
$$
\|f\|_{\HAL}\approx\|f\|_{F\dot{H}_{L}^{\lambda}({\mathbb R}^{n})}\leq C \|f\|_{\HML}.
$$
Consequently,
$$
\HML = F\dot{H}_{L}^{\lambda}({\mathbb R}^{n})\cap L^2({\mathbb R}^n)=\HAL,
$$
with equivalent norms. It follows that the three completions
$$
F\dot{H}_{L}^{\lambda}({\mathbb R}^{n});\ \ F\dot{H}_{L,at,M}^{\lambda}({\mathbb R}^{n});\ \
F\dot{H}_{L,mol,M,\epsilon}^{\lambda}({\mathbb R}^{n})
$$
coincide for different choices of $(M,\lambda,\epsilon)\in{\mathbb N}\times(0, n)\times(0,\infty)$.
\end{proof}

\medskip

The following representation will be used later on.

\begin{theorem}\label{th2.11}  Assume $L$  is   a nonnegative self-adjoint operator on $L^2({\mathbb R^n})$ satisfying \eqref{e2.1}. Let $M\ge 1$. Suppose $f=\sum_{i=0}^{N}\lambda_i a_i$,
where $\{a_i\}_{i=0}^N$ is a family of $(2,2M,\lambda)$-atoms and
$\sum_{i=0}^{N}|\lambda_i|<\infty$. Then there is a representation of
$f=\sum_{i=0}^{K}\mu_i m_i$, where the $m_i$'s are
$(2,M,\lambda,M)$-molecules  and
\begin{eqnarray*}
 C_1\|f\|_{\HAL}\leq\sum\limits_{i=0}^{K}|\mu_i|\leq C_2\|f\|_{\HAL},
\end{eqnarray*}
with $C_j=C_j(L,M,\lambda,n)$ for $j=1,2$.
\end{theorem}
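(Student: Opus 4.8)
The plan is to separate the two estimates. The lower bound $C_1\|f\|_{\HAL}\le\sum_{i}|\mu_i|$ will require no work once a molecular representation is in hand: by the very definition of the molecular norm, any $(2,M,\lambda,M)$-representation satisfies $\sum_i|\mu_i|\ge\|f\|_{F\dot{H}_{L,mol,M,M}^{\lambda}}$, and Theorem~\ref{th2.10} identifies this quantity with $\|f\|_{\HAL}$ up to a constant on $L^2(\RN)\cap F\dot{H}_{L}^{\lambda}(\RN)$, a space that contains $f$ since it is a finite combination of $L^2$-atoms. Thus all the substance lies in producing a \emph{finite} and \emph{efficient} molecular representation, i.e.\ one meeting the upper bound $\sum_i|\mu_i|\le C_2\|f\|_{\HAL}$.

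For efficiency I would discard the given coefficients $\{\lambda_i\}$ --- a fixed atomic expansion can be arbitrarily far from optimal --- and rebuild $f$ through the tent-space machinery. Since $f\in L^2(\RN)\cap F\dot{H}_{L}^{\lambda}(\RN)$, the function $F(\cdot,t):=t^2Le^{-t^2L}f$ belongs to $F\dot{T}^\lambda\cap T^2_2$. Theorem~\ref{th1.7} then yields $F=\sum_{j}\nu_jA_j$ into $F\dot{T}^\lambda$-atoms, with convergence in both $T^2_2$ and $F\dot{T}^\lambda$ and with $\sum_j|\nu_j|\le C\|F\|_{F\dot{T}^\lambda}=C\|f\|_{F\dot{H}_{L}^{\lambda}}\approx C\|f\|_{\HAL}$, cf.\ \eqref{e2.17}. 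Inserting this into the Calder\'on reproducing formula \eqref{e2.18} gives $f=c_\Psi\sum_j\nu_j\,\pi_{\Psi,L}(A_j)$ in $L^2$, and Lemma~\ref{le2.5} shows that each $c_\Psi\,\pi_{\Psi,L}(A_j)$ equals a fixed constant times a $(2,M,\lambda)$-atom; since such an atom is supported in a single ball, it is in particular a $(2,M,\lambda,M)$-molecule (its annular norms over $U_j(B)$ vanishing for $j\ge1$ and matching the bound for $j=0$). Collecting the constants into new coefficients $\mu_j$ produces a molecular representation with $\sum_j|\mu_j|\le C_2\|f\|_{\HAL}$, which is the upper bound.

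The genuinely delicate requirement is that the target sum be finite. I would truncate the Calder\'on formula \eqref{e2.18} in the scale variable, approximating $f$ by $c_\Psi\int_\delta^{1/\delta}\Psi(t\sqrt L)F(\cdot,t)\,\frac{dt}{t}$, whose error tends to $0$ in $L^2(\RN)$ as $\delta\to0$ by the quadratic estimate underlying \eqref{e2.12}. On the slab $t\in[\delta,1/\delta]$ the finite speed propagation property (Lemma~\ref{lemma2.4}) confines the kernel of $\Psi(t\sqrt L)$ to $\{|x-y|\le t\le 1/\delta\}$, so that only finitely many dyadic scales and a bounded spatial range carry the main part, which can be organised into finitely many molecules. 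The crux is then to absorb the small remainder into this finite sum rather than letting it regenerate an infinite tail; this is where the surplus between the input order $2M$ and the output order $M$ is spent, since writing a $(2,2M,\lambda)$-atom as $L^{M}(L^{M}b)$ exhibits $M$ spare orders of smoothness that convert into the molecular decay exponent $\epsilon=M$ and let one capture the Gaussian-tailed, non-compactly-supported remainders by honest $(2,M,\lambda,M)$-molecules whose annular bounds are furnished by that surplus together with \eqref{e2.1}. I expect this reconciliation of a finite representation with the molecular decay conditions to be the main obstacle; the norm bookkeeping and the lower bound are routine given Theorems~\ref{th1.7} and~\ref{th2.10}.
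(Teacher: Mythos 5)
Your treatment of the lower bound and of the \emph{efficiency} of the decomposition is sound: the route through Theorem \ref{th1.7}, the reproducing formula \eqref{e2.18} and Lemma \ref{le2.5} does produce a molecular representation $f=\sum_j\mu_j m_j$ with $\sum_j|\mu_j|\le C\|f\|_{\HAL}$ (via \eqref{e2.17} and Theorem \ref{th2.7}), and the lower bound is indeed immediate from the definition of the molecular norm together with Theorem \ref{th2.10}. But that representation is \emph{infinite}, and the finiteness of the index $K$ is the entire content of Theorem \ref{th2.11} --- without it the statement is already a corollary of Theorem \ref{th2.10}, since every $(2,M,\lambda)$-atom is a $(2,M,\lambda,M)$-molecule. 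Finiteness is also exactly what is used downstream: in the proof of Theorem \ref{th3.3} the functional $f\in\mathcal{L}^{2,\lambda}_L(\mathbb{R}^n)$ is paired with $g$ term by term, which is legitimate for a finite sum but not for an $L^2$-convergent infinite one, because $f$ lives only in the dual of $\MM$ and the pairing is not $L^2$-continuous. You correctly single out finiteness as ``the main obstacle'' and then leave it unresolved, so the proposal is not a proof.

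Moreover, the sketch you offer for closing this gap rests on a false premise. After truncating \eqref{e2.18} to $\delta\le t\le 1/\delta$, you invoke Lemma \ref{lemma2.4} to claim the main part lives in ``a bounded spatial range''. It does not: the input $F(\cdot,t)=t^2Le^{-t^2L}f$ is not compactly supported even though $f$ is, because the heat semigroup $e^{-t^2L}$ has no finite propagation speed --- only $\cos(t\sqrt{L})$, and hence $\Phi(t\sqrt{L})$, does. So the truncated integral is still globally supported and cannot be organised into finitely many molecules by support considerations. Your remaining suggestion --- absorb an $L^2$-small remainder into the finite part by spending the surplus between the orders $2M$ and $M$ --- is precisely the hard estimate, and nothing concrete is supplied: if the remainder is $h=L^M\beta$, smallness of $\|h\|_{L^2}$ controls only the top-order quantity $\|(r_B^2L)^M\beta\|_{L^2}$ on the central ball, whereas a $(2,M,\lambda,M)$-molecule also requires the lower-order bounds $\|(r_B^2L)^k\beta\|_{L^2(U_j(B))}$ for all $k=0,\dots,M-1$ and all annuli $j\ge1$, none of which follow from $L^2$-smallness of $h$. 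This is exactly the computation carried out in \cite[Theorem 5.4]{HLMMY}, to which the paper's own proof simply defers; your proposal neither reproduces that argument nor supplies a substitute for it.
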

\begin{proof}
This follows from a slight modification of the argument for \cite[Theorem ~5.4]{HLMMY}.
\end{proof}

\section{Identification between $\big(F\dot{H}_{L}^{\lambda}(\mathbb R^n)\big)^\ast$ and $\mathcal{L}_{L}^{2,\lambda}(\mathbb R^n)$}
 \setcounter{equation}{0}

\subsection{A characterization of  $\mathcal{L}_{L}^{2,\lambda}(\mathbb R^n)$}
\begin{definition}
\label{de41}
Given a nonnegative self-adjoint operator $L$ on $L^2({\mathbb R^n})$ satisfying \eqref{e2.1}. For $(\nu,\lambda,\epsilon)\in{\mathcal D}(L)\times (0,n)\times(0,\infty)$ let
$$
\phi=L\nu\in L^2({\mathbb R}^n)\ \ \&\ \
\|\phi\|_{\MM}:=\sup_{j\geq 0}\Bigl[2^{j\epsilon}2^{j\frac{\lambda}{2}}\sum_{k=0}^1
\|L^{k} \nu\|_{L^2(U_j(B_0))}\Bigr],
$$
where $B_0$ is the ball centered at some $x_0\in \mathbb{R}^n$ with radius $1$. Then
$$
\MM:=\{\phi=L\nu\in L^2(\mathbb{R}^n): \,\|\phi\|_{\MM}<\infty\}.
$$
\end{definition}

The following two facts are worth mentioning:
\begin{itemize}

\item if $\phi\in\MM$ with norm $1$, then $\phi$ is a
$(2,1,\lambda,\epsilon)$-molecule adapted to $B_0$. Conversely, if $m$ is a
$(2,1,\lambda, \epsilon)$-molecule adapted to any ball, then $m\in\MM$.

\item if $\dMM$ stands for the dual of $\MM$ and $A_t$ denotes either $(I+t^2L)^{-1}$
or $e^{-t^2L}$, then $\dMM\ni f\mapsto (I-A_t)f$ can be determined in the sense of distribution and so this mapping belongs to
$L^2_{\rm loc}({\mathbb R}^n)$ -- indeed, if $\varphi\in L^2(B)$ for some ball $B$,
it follows that
$(I-A_t)\varphi\in\MM$ for every $\epsilon>0$, and so that
$$
\big|\langle (I-A_t)f, \varphi\rangle\big|=
\big|\langle f, (I-A_t)\varphi\rangle\big|\leq C_{t,\, r_B\, {\rm dist}(B, x_0)}
\|f\|_{\dMM}\|\varphi\|_{L^2(B)}.
$$
Similarly, one has $(t^2L)A_tf\in L^2_{\rm loc}({\mathbb R}^n)$.
\end{itemize}

\begin{definition}\label{def 3.1} Given $\lambda\in (0,n)$ and a nonnegative self-adjoint operator $L$ on $L^2({\mathbb R^n})$ satisfying
\eqref{e2.1}. Let
\begin{eqnarray*}
{\mathcal{E}}_\lambda:= \bigcap\limits_{\epsilon>0}\dMM.
\end{eqnarray*}
Then an element $f\in {\mathcal{E}}_\lambda$ is said to belong to $\mathcal{L}_{L}^{2,\lambda}({\mathbb R}^{n})$ provided
\begin{equation*}
\|f\|_{\mathcal{L}_{L}^{2,\lambda}}:=\sup_{B\subset {\mathbb R}^n}\Big({1\over r_{B}^{\lambda}}
\int_B|(I-e^{-r_B^2 L})f(x)|^2dx\Big)^{1/2} <\infty.
\end{equation*}
\end{definition}

It is worth remarking that Definition \ref{def 3.1} is essentially equivalent to the original definition of a quadratic Campanato space  associated to $L$ introduced in \cite{DXY}. With this in mind, the following description of $\mathcal{L}_{L}^{2,\lambda}({\mathbb R}^{n})$ is quite natural.

\begin{lemma}\label{lemma3.2} Assume $L$ is a nonnegative self-adjoint operator on $L^2({\mathbb R^n})$ satisfying
\eqref{e2.1}. Let  $\lambda\in (0,n)$.  An element
$f\in{\mathcal{E}}_\lambda$ belongs to $\mathcal{L}_{L}^{2,\lambda}({\mathbb R}^{n})$ if and only if
\begin{eqnarray*}
\sup_{B\subset {\mathbb R}^n} \Big({1\over r_{B}^{\lambda}}
\int_B|(I-(I+r_B^2 L)^{-1})f(x)|^2dx\Big)^{1/2}<\infty.
\end{eqnarray*}
\end{lemma}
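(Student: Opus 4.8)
The plan is to deduce the stated equivalence from a single operator identity together with off--diagonal ($L^2$) bounds for an auxiliary bounded function of $L$. Write $A_r=e^{-r^2L}$ and $\widetilde A_r=(I+r^2L)^{-1}$ and set
$$
N_e(f):=\sup_{B}\Big(r_B^{-\lambda}\!\int_B|(I-A_{r_B})f|^2\,dx\Big)^{1/2},\qquad N_r(f):=\sup_{B}\Big(r_B^{-\lambda}\!\int_B|(I-\widetilde A_{r_B})f|^2\,dx\Big)^{1/2},
$$
so that $N_e(f)=\|f\|_{\mathcal L^{2,\lambda}_L}$ and the assertion is exactly $N_e(f)<\infty\Leftrightarrow N_r(f)<\infty$. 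The elementary fact that drives the proof is that the two symbols $1-e^{-a}$ and $1-(1+a)^{-1}=a/(1+a)$ differ only by a harmless factor: the function
$$
\widetilde m(a):=\frac{(1+a)(1-e^{-a})}{a}=\frac{1-e^{-a}}{a}+\big(1-e^{-a}\big)
$$
extends to a real--analytic function on $[0,\infty)$ with $\widetilde m(0)=\widetilde m(\infty)=1$, and both $\widetilde m$ and $1/\widetilde m$ are bounded, with bounded derivatives and with $\widetilde m-1$, $\widetilde m^{-1}-1$ decaying at infinity. By the spectral theorem this yields, on $L^2(\mathbb R^n)$, the operator identities
$$
I-e^{-r^2L}=\widetilde m(r^2L)\big(I-(I+r^2L)^{-1}\big),\qquad I-(I+r^2L)^{-1}=\tfrac1{\widetilde m}(r^2L)\big(I-e^{-r^2L}\big),
$$
which, by the duality/localisation properties recorded after Definition \ref{de41}, persist for every $f\in\mathcal E_\lambda$ in the sense of $L^2_{\rm loc}(\mathbb R^n)$.

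First I would establish that, for $F\in\{\widetilde m,1/\widetilde m\}$, the operator $F(r^2L)$ obeys good off--diagonal estimates uniformly in $r>0$: for every pair of closed sets $E,G\subset\mathbb R^n$ and every fixed large $N$,
$$
\big\|F(r^2L)\big\|_{L^2(E)\to L^2(G)}\le C_N\Big(1+\frac{\mathrm{dist}(E,G)}{r}\Big)^{-N}.
$$
Granting this, the comparisons $N_e(f)\le C N_r(f)$ and $N_r(f)\le C N_e(f)$ follow symmetrically. For instance, to bound $\|(I-e^{-r^2L})f\|_{L^2(B)}=\|\widetilde m(r^2L)g\|_{L^2(B)}$ with $g:=(I-(I+r^2L)^{-1})f$ and $r=r_B$, decompose $g$ over the annuli $U_k=2^kB\setminus2^{k-1}B$; covering each dilate $2^kB$ by $O(2^{kn})$ balls of radius $r$ and using the defining bound for $N_r$ on each gives $\|g\|_{L^2(U_k)}\le C\,2^{kn/2}r^{\lambda/2}N_r(f)$. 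Feeding this into the off--diagonal estimate and summing,
$$
\|\widetilde m(r^2L)g\|_{L^2(B)}\le C\sum_{k\ge0}2^{-kN}\|g\|_{L^2(U_k)}\le C\,r^{\lambda/2}N_r(f)\sum_{k\ge0}2^{-k(N-n/2)}\le C\,r^{\lambda/2}N_r(f),
$$
provided $N>n/2$. Taking the supremum over $B$ yields $N_e(f)\le CN_r(f)$; interchanging the roles of $\widetilde m$ and $1/\widetilde m$ (and of $A_r$, $\widetilde A_r$) gives $N_r(f)\le CN_e(f)$, which together prove the lemma.

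The step I expect to be the main obstacle is precisely the uniform off--diagonal bound for $\widetilde m(r^2L)$ and $(1/\widetilde m)(r^2L)$, since neither symbol is compactly Fourier--supported, so Lemma \ref{lemma2.4} does not apply verbatim. I would obtain it by the finite--speed--of--propagation device already used in the text: writing $G(u)=\widetilde m(u^2)$ (an even function), the constant part contributes the identity (perfectly local), while $G-1$ is smooth and decays like $u^{-2}$ at infinity, so a dyadic Fourier decomposition of $G-1$ into pieces with integrable Fourier transform, combined with the support property \eqref{es3.9} of $\cos(t\sqrt L)$, produces kernel bounds for $(G-1)(r\sqrt L)=\widetilde m(r^2L)-I$ of any prescribed polynomial order, with constants independent of $r$; the same applies to $1/\widetilde m$. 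Two routine points must be checked along the way: that $f\in\mathcal E_\lambda$ makes $g$ a genuine element of $L^2_{\rm loc}$ with the polynomial growth $\|g\|_{L^2(2^kB)}\le C\,2^{kn/2}r^{\lambda/2}N_r(f)$ used above (this is exactly the content of the remarks after Definition \ref{de41}), and that the hypothesis $\lambda\in(0,n)$ keeps the covering exponents in the convergent range so that the annular sums close. No Gaussian decay is needed; polynomial off--diagonal decay of order $N>n/2$ suffices.
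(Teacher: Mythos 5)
Your proposal is correct in its main lines, and it supplies an actual argument where the paper gives none: the paper's ``proof'' is a one-line citation to Hofmann--Mayboroda \cite[Lemma 8.1]{HM}, whose mechanism is to compare $I-e^{-r^2L}$ and $I-(I+r^2L)^{-1}$ by writing the resolvent as the subordinated average $(I+r^2L)^{-1}=\int_0^\infty e^{-s}e^{-sr^2L}\,ds$ and running Gaffney (off-diagonal $L^2$) estimates for the heat semigroup through an annular decomposition with the same covering trick you use. Your route replaces that decomposition by the single spectral multiplier $\widetilde m(a)=(1+a)(1-e^{-a})/a$, so that $I-e^{-r^2L}=\widetilde m(r^2L)\bigl(I-(I+r^2L)^{-1}\bigr)$ and conversely with $1/\widetilde m$, and derives the needed uniform polynomial off-diagonal bounds for $\widetilde m(r^2L)-I$ and $(1/\widetilde m)(r^2L)-I$ from finite speed of propagation \eqref{es3.9}; since $\widetilde m(u^2)-1$ and $\widetilde m(u^2)^{-1}-1$ are even, smooth, and $O(u^{-2})$ with all derivatives integrable, their Fourier transforms decay rapidly and the $\cos(t\sqrt L)$ representation gives $C_N(1+\mathrm{dist}(E,G)/r)^{-N}$ for every $N$, which is more than the $N>n/2$ you need against the growth $\|g\|_{L^2(2^kB)}\le C2^{kn/2}r^{\lambda/2}$ coming from the covering argument. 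This is a clean and arguably more symmetric variant of the cited proof (one identity handles both directions), at the modest cost of invoking Proposition \ref{prop3.3} and the Fourier-inversion machinery of Section 3 rather than only the Gaussian bound \eqref{e2.1}. Two small remarks: the convergence of the annular sum depends only on $N>n/2$, not on $\lambda\in(0,n)$ as you suggest; and the one step that still needs to be written out is the justification of the multiplier identity for $f$ merely in $\mathcal{E}_\lambda$ rather than in $L^2$ --- concretely, one must check that $\widetilde m(r^2L)$ and $(1/\widetilde m)(r^2L)$ map ${\mathcal M}^{2,\lambda,\epsilon}(L)$ boundedly into some ${\mathcal M}^{2,\lambda,\epsilon'}(L)$ (which follows from the same off-diagonal bounds) so that the identity can be dualized through the pairing described after Definition \ref{de41} and then identified with your locally defined sum $\sum_k\widetilde m(r^2L)(g\chi_{U_k})$ in $L^2_{\rm loc}$. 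With that paragraph added, your argument is complete.
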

\begin{proof} This follows from a minor change of the argument for \cite[Lemma 8.1]{HM}.
\end{proof}

\subsection{Preduality for $\mathcal{L}_{L}^{2,\lambda}({\mathbb R}^{n})$}  By Theorem \ref{th2.10}, we  denote $F\dot{H}_{L,at,M}^{\lambda}({\mathbb R}^{n})$ by $F\dot{H}_{L,at}^{\lambda}({\mathbb R}^{n})$.  Theorem \ref{t11} is split into two parts: Theorem \ref{th3.3} and its converse Theorem \ref{th3.4} below.

\begin{theorem}\label{th3.3} Assume $L$ is a nonnegative self-adjoint operator on $L^2({\mathbb R^n})$ satisfying
\eqref{e2.1}. Let $\lambda\in (0,n)$ and $M\ge 1$.   Then  for any $f\in \mathcal{L}_{L}^{2,\lambda}({\mathbb R}^{n})$, the linear
functional
$$
\ell(g):=\langle f,g\rangle,
$$
which is initially defined on the dense subspace of $\MM $ comprising finite linear combinations of $(2,1,\lambda,\epsilon)$-molecules, $\epsilon>\frac{n-\lambda}{2}$ and where the pairing $\langle ,\rangle$ acts between $\MM $ and its dual, has a unique
bounded extension to $F\dot{H}_{L,at}^{\lambda}({\mathbb R}^{n})$ with
$$
\|\ell\|_{(F\dot{H}_{L,at}^{\lambda})^\ast}\leq C\|f\|_{\mathcal{L}_{L}^{2,\lambda}},
\quad\mbox{for some} \,\, C \,\mbox{ independent of }f.
$$
\end{theorem}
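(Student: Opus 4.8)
The plan is to reduce the boundedness of $\ell$ to a single uniform estimate over molecules, and then to prove that estimate by a self-adjointness manoeuvre that exposes the Campanato quantity $(I-e^{-r_B^2L})f$. First I would record that, by Theorem~\ref{th2.10}, the atomic and molecular spaces coincide with equivalent norms and that finite linear combinations of $(2,1,\lambda,\epsilon)$-molecules are dense in $F\dot{H}_{L,at}^{\lambda}(\mathbb{R}^n)$; consequently the bounded extension, once its existence is shown, is automatically unique. Since the molecular norm is an $\ell^1$-infimum and $\ell$ is linear, it then suffices to produce a constant $C$ with $|\langle f,m\rangle|\le C\|f\|_{\mathcal{L}_L^{2,\lambda}}$ uniformly over all $(2,1,\lambda,\epsilon)$-molecules $m$ with $\epsilon>\tfrac{n-\lambda}{2}$: for $g=\sum_j\mu_j m_j$ this yields $|\ell(g)|\le\sum_j|\mu_j|\,|\langle f,m_j\rangle|\le C\|f\|_{\mathcal{L}_L^{2,\lambda}}\sum_j|\mu_j|$, and taking the infimum over molecular representations and invoking the norm equivalence gives $|\ell(g)|\le C\|f\|_{\mathcal{L}_L^{2,\lambda}}\|g\|_{F\dot{H}_{L,at}^{\lambda}}$.

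For a molecule $m=Lb$ adapted to $B=B(x_B,r_B)$ I would split the operator at the scale $r_B$ and transfer the semigroup onto $f$ via self-adjointness, writing
\[
\langle f,m\rangle=r_B^{-2}\langle (I-e^{-r_B^2L})f,\,b\rangle+r_B^{-2}\big\langle f,\,[r_B^2L-(I-e^{-r_B^2L})]b\big\rangle.
\]
The first (main) term is exactly the Campanato datum tested against $b$. Decomposing $b=\sum_{j\ge0}b\chi_{U_j(B)}$ and applying Cauchy--Schwarz on each annulus, I bound it by $r_B^{-2}\sum_j\|(I-e^{-r_B^2L})f\|_{L^2(U_j(B))}\,\|b\|_{L^2(U_j(B))}$, where the molecular size bound supplies the factor $r_B^{2}\,2^{-j\epsilon}(2^jr_B)^{-\lambda/2}$. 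For the second (remainder) term I would use the identity $r_B^2L-(I-e^{-r_B^2L})=\int_0^{r_B^2}\!\!\int_0^s L^2e^{-uL}\,du\,ds$ to re-express it through the square-function densities $u^2L^2e^{-uL}f$, which are again governed by $\|f\|_{\mathcal{L}_L^{2,\lambda}}$; the Gaussian bounds \eqref{e2.2} then provide the off-diagonal decay in $j$ needed to carry out the annular summation.

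To close the main term I need a cross-scale comparison: the Campanato norm directly bounds $\|(I-e^{-(2^jr_B)^2L})f\|_{L^2(2^jB)}$ by $(2^jr_B)^{\lambda/2}\|f\|_{\mathcal{L}_L^{2,\lambda}}$ at the matching scale $2^jr_B$, whereas on $U_j(B)$ I encounter the regularisation at the smaller scale $r_B$. Writing $e^{-(2^jr_B)^2L}-e^{-r_B^2L}=-\int_{r_B^2}^{(2^jr_B)^2}Le^{-sL}\,ds$ and telescoping over the dyadic scales $s_i=(2^ir_B)^2$, $0\le i\le j$, I would control $\|(I-e^{-r_B^2L})f\|_{L^2(2^jB)}$ by $(2^jr_B)^{\lambda/2}\|f\|_{\mathcal{L}_L^{2,\lambda}}$ up to a controlled (at worst polynomial) growth in $j$ — the analogue of the logarithmic increment $|f_B-f_{2^jB}|\lesssim j\|f\|_{\mathrm{BMO}}$. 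The product of the three factors collapses to a multiple of $2^{-j\epsilon}\|f\|_{\mathcal{L}_L^{2,\lambda}}$ times this growth, and it is precisely to absorb such growth together with the kernel factors of the form $2^{j(\lambda+\delta)}$ (with $\delta$ allowed up to $n-\lambda$) arising from \eqref{e2.2} in the remainder estimate that the hypothesis $\epsilon>\tfrac{n-\lambda}{2}$ guarantees convergence of the resulting series in $j$.

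The hard part will be twofold. First, every pairing must be legitimised, since $f\in\mathcal{E}_\lambda$ lies only in the molecular dual and need not belong to $L^2$; here the two facts recorded after Definition~\ref{de41} — that $(I-e^{-r_B^2L})f$ and $(t^2L)e^{-t^2L}f$ both lie in $L^2_{\mathrm{loc}}(\mathbb{R}^n)$ — are exactly what justify moving the operators onto $f$ and interpreting $\langle(I-e^{-r_B^2L})f,b\rangle$ and the remainder integrals as genuine $L^2_{\mathrm{loc}}$ pairings. Second, and more substantial, is the cross-scale control together with the summability of the combined annular and scale contributions; this is where the Gaussian off-diagonal decay, the molecular decay $2^{-j\epsilon}$, and the threshold $\epsilon>\tfrac{n-\lambda}{2}$ all must be balanced against one another. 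I would, if convenient, raise the molecular order (using Theorem~\ref{th2.10} to pass freely between orders, and Theorem~\ref{th2.11} to convert to $(2,M,\lambda,M)$-molecules) so as to give the tail estimates additional room.
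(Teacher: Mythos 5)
Your overall reduction (uniqueness from density, and boundedness from a uniform estimate $|\langle f,m\rangle|\le C\|f\|_{\mathcal{L}_L^{2,\lambda}}$ over single molecules) matches the paper's, but your treatment of the single-molecule estimate diverges in a way that leaves a genuine gap, and it also overcomplicates the part that is actually easy. The easy part first: no cross-scale telescoping is needed for the main term. The paper simply covers $U_j(B)$ by approximately $2^{jn}$ balls of radius $r_B$ (the same scale as the regularizing operator), so that the Campanato norm directly gives $\|(I-A_{r_B})f\|_{L^2(U_j(B))}\le 2^{jn/2}r_B^{\lambda/2}\|f\|_{\mathcal{L}_L^{2,\lambda}}$; against the molecular decay $2^{-j\epsilon}(2^jr_B)^{-\lambda/2}$ this yields $\sum_j 2^{-j\epsilon}2^{j(n-\lambda)/2}$, which is exactly what the hypothesis $\epsilon>\frac{n-\lambda}{2}$ is calibrated to sum. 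Your telescoping over scales $(2^ir_B)^2$ would give a sharper bound but is unnecessary, and carrying it out rigorously is itself nontrivial because $e^{-sL}$ is nonlocal.

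The genuine gap is in your remainder term $r_B^{-2}\langle f,[r_B^2L-(I-e^{-r_B^2L})]b\rangle$. You propose to control it via the ``square-function densities'' $u^2L^2e^{-uL}f$, asserting that these are ``again governed by $\|f\|_{\mathcal{L}_L^{2,\lambda}}$.'' That assertion is a Carleson-measure/square-function characterization of $\mathcal{L}_L^{2,\lambda}(\mathbb{R}^n)$ which is neither proved in the paper nor in your proposal, and it is not a triviality (it is essentially the hard half of a $T(b)$-type/Carleson duality argument); moreover, since $f$ lives only in $\bigcap_{\epsilon>0}(\mathcal{M}^{2,\lambda,\epsilon}(L))^*$ and not in $L^2$, each interchange of $L^2e^{-uL}$ with the pairing inside the double $du\,ds$ integral must be separately legitimized, and near $u=0$ the bound $\|L^2e^{-uL}b\|_{L^2}\lesssim u^{-2}\|b\|_{L^2}$ is not integrable, so the convergence of $\int_0^{r_B^2}\!\!\int_0^s$ is itself at stake without the cancellation you have not established. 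The paper avoids all of this by splitting with the resolvent instead of the semigroup: writing $\langle f,m\rangle=\langle(I-(I+r_B^2L)^{-1})f,m\rangle+\langle(I+r_B^2L)^{-1}f,m\rangle$ and using the exact identity $L(I+r_B^2L)^{-1}=r_B^{-2}\bigl(I-(I+r_B^2L)^{-1}\bigr)$ together with $m=Lb$, so that the ``remainder'' collapses, with no error term, into the same quantity $(I-(I+r_B^2L)^{-1})f$ tested against $(r_B^2L)^{-1}m=r_B^{-2}b$; Lemma~\ref{lemma3.2} then transfers the heat-semigroup Campanato norm to this resolvent version. To repair your argument you should either prove the square-function bound you are invoking or switch to the resolvent splitting.
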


\begin{proof} Let us prove first that given $(2,1,\lambda,\epsilon)$-molecule $m$ with $\varepsilon>\frac{n-\lambda}{2}$ one has
\begin{eqnarray}\label{e3.5}
|\langle f, m\rangle|\leq C\|f\|_{\mathcal{L}_{L}^{2,\lambda}}\quad\forall\quad f\in \mathcal{L}_{L}^{2,\lambda}({\mathbb R}^{n}).
\end{eqnarray}
Note that if $f\in \mathcal{L}_{L}^{2,\lambda}({\mathbb R}^{n})$ then $f\in (\MM )^{*}$ and hence
$(I-(I+r_B^2L)^{-1})f\in L^2_{\rm loc}$. Thus, with $B$ denoting the ball associated with $m$,
we may write
$$
\langle f, m\rangle=\int_{{\mathbb R}^n}(I-(I+r_B^2L)^{-1})f(x){\overline{m(x)}}\,dx+\Big\langle (I+r_B^2L)^{-1}f, m\Big\rangle=:I_1+I_2.
$$
So, \eqref{e3.5} follows from controlling $I_1$ and $I_2$ from above.

For the term $I_1$, we apply Cauchy--Schwarz's inequality, the $L^2$-normalization of $m$ and (\ref{e2.9}) to obtain

\begin{align*}
|I_1|&\leq \sum_{j=0}^{\infty}\Big(\int_{U_j(B)}
|(I-(I+r_B^2L)^{-1})f(x)|^2dx\Big)^{1/2}
\Big(\int_{U_j(B)}|m(x)|^2dx\Big)^{1/2} \\
&\leq \sum_{j=0}^{\infty}2^{-j\epsilon}
(2^jr_{B})^{-\lambda/2}\Big(\int_{U_j(B)}|(I-(I+r_B^2L)^{-1})f(x)|^2dx\Big)^{1/2}.
\end{align*}
Upon covering $U_j(B)$ by approximate $2^{jn}$ balls of the radius $r_{B}$ and using $\epsilon>(n-\lambda)/2$,  we obtain
\begin{align*}
|I_1|
&\leq \sum_{j=0}^{\infty}2^{-j\epsilon} (2^jr_{B})^{-\lambda/2}2^{jn/2}r_{B}^{\lambda/2}\|f\|_{\mathcal{L}_{L}^{2,\lambda}}\nonumber\\
&= \sum_{j=0}^{\infty}2^{-j\epsilon} 2^{j(n/2-\lambda/2)}\|f\|_{\mathcal{L}_{L}^{2,\lambda}}\\
&\leq C\|f\|_{\mathcal{L}_{L}^{2,\lambda}}.
\end{align*}

For the term $I_2$, we use
$$
L(I+r_B^2L)^{-1}=r_B^{-2}\big(I-(I+r_B^2L)^{-1} \big)
$$
and the definition of a $(2,1,\lambda,\epsilon)$-molecule to derive
\begin{align*}
|I_2| &\leq C\Big|\int_{\mathbb R^n} (I-(I+r_B^2L)^{-1})f(x)
{\overline{(r_B^{2}L)^{-1}m(x)}}\,dx\Big|\\
&\leq C\sum_{j=0}^{\infty}\Big(\int_{U_j(B)}
|(I-(I+r_B^2L)^{-1})f(x)|^2dx\Big)^{1/2}
\Big(\int_{U_j(B)}\!\!\!|(r_B^2L)^{-1}m(x)|^2dx\Big)^{1/2}\\
&\leq C\sum_{j=0}^{\infty}2^{-j\epsilon}
(2^jr_{B})^{-\lambda/2}\Big(\int_{U_j(B)}|(I-(I+r_B^2L)^{-1})f(x)|^2dx\Big)^{1/2}.
\end{align*}
Furthermore, upon covering each $U_j(B)$ by approximate $2^{jn}$ balls of the radius $r_{B}$, we obtain
$$
|I_2|\leq\sum_{j=0}^{\infty}2^{-j\epsilon} (2^jr_{B})^{-\lambda/2}2^{jn/2}r_{B}^{\lambda/2}
\|f\|_{\mathcal{L}_{L}^{2,\lambda}}\leq C\|f\|_{\mathcal{L}_{L}^{2,\lambda}}.
$$

Our next goal is to show that for every
$N\in{\Bbb N}$ and for every $g=\sum_{j=0}^N \lambda_j a_j\in F\dot{H}_{L,at}^{\lambda}({\mathbb R}^{n})$, where
$\{a_j\}_{j=0}^N$ are $(2,2n,\lambda)$-atoms, we have

\begin{eqnarray}\label{e3.8}
\Big|\int_{\mathbb R^n} f(x){g(x)}\, dx\Big|
&\leq& C\big\|g\big\|_{F\dot{H}_{L,at}^{\lambda}}\big\|f\big\|_{\mathcal{L}_{L}^{2,\lambda}}.
\end{eqnarray}

Since the space of all finite linear combinations of $(2,2n,\lambda)$-atoms
is dense in $F\dot{H}_{L,at}^{\lambda}$, the linear functional $\ell$ will then have a unique
bounded extension to $F\dot{H}_{L,at}^{\lambda}$ defined in a standard fashion by continuity.
Below is a demonstration of (\ref{e3.8}). By Theorem~\ref{th2.11}, there is a
representation of
$$
g=\sum_{j=0}^N \lambda_j a_j=\sum_{i=0}^{K}\mu_i m_i,
$$
where $\{m_i\}_{i=0}^K$ are $(2,n, \lambda, n)$-molecules (of course, they are  $(2,1, \lambda, n)$-molecules) and
\begin{eqnarray*}
\sum\limits_{i=0}^{K}|\mu_i|\leq C\|g\|_{F\dot{H}_{L,at}^{\lambda}}.
\end{eqnarray*}
Therefore, by (\ref{e3.5}) we have
\begin{eqnarray*}
\Big|\int_{\mathbb R^n} f(x){g(x)}\,dx\Big|
&\leq& \sum\limits_{i=0}^{K}|\mu_i|
\Big|\int_{\mathbb R^n} f(x)m_i(x)\,dx\Big|\\
&\leq&C\sum\limits_{i=0}^{K}|\mu_i|\|f\|_{\mathcal{L}_{L}^{2,\lambda}}\\
&\leq&C\|g\|_{F\dot{H}_{L,at}^{\lambda}}\|f\|_{\mathcal{L}_{L}^{2,\lambda}},
\end{eqnarray*}
whence reaching (\ref{e3.8}) which in turn finishes the proof of
Theorem~\ref{th3.3}.
\end{proof}

Our next result is essentially the converse of Theorem~\ref{th3.3}.

\begin{theorem}\label{th3.4}  Assume $L$  is   a nonnegative self-adjoint operator   on $L^2({\mathbb R^n})$ satisfying \eqref{e2.1}.
Let  $\epsilon>0$ and $\lambda\in (0,n)$.  Suppose
  $\ell$ is a
bounded linear functional on $F\dot{H}_{L,at}^{\lambda}({\mathbb R}^{n})$. Then $\ell\in \mathcal{L}_{L}^{2,\lambda}({\mathbb R}^{n})$, and for
any $g\in F\dot{H}_{L,at}^{\lambda}({\mathbb R}^{n})$ (which can be represented as finite linear combinations of
$(2,1,\lambda,\epsilon)$-molecules) there holds
\begin{align}\label{ee4.1}
\ell(g)=\langle \ell,g\rangle,
\end{align}
where the pairing is that between $\MM$ and its dual. Moreover,
$$
\|\ell\|_{\mathcal{L}_{L}^{2,\lambda}}\leq C\|\ell\|_{(F\dot{H}_{L,at}^{\lambda})^\ast}.
$$
\end{theorem}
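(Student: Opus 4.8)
The plan is to reconstruct the representing element directly from $\ell$ and then estimate its Campanato norm by a one-ball duality argument, passing to the resolvent via Lemma~\ref{lemma3.2}. First I would identify $\ell$ with an element of ${\mathcal E}_\lambda=\bigcap_{\epsilon>0}\dMM$. Indeed, every $\phi\in\MM$ is, after normalization, a $(2,1,\lambda,\epsilon)$-molecule adapted to $B_0$ (first item following Definition~\ref{de41}), so Lemma~\ref{th2.9} gives $\|\phi\|_{F\dot{H}_{L,at}^{\lambda}}\le C\|\phi\|_{\MM}$; hence $|\ell(\phi)|\le C\|\ell\|_{(F\dot{H}_{L,at}^{\lambda})^\ast}\|\phi\|_{\MM}$, which means $\ell\in\dMM$ for every $\epsilon>0$, i.e. $\ell\in{\mathcal E}_\lambda$. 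By the second item following Definition~\ref{de41}, $(I-(I+r_B^2L)^{-1})\ell$ is then a well-defined element of $L^2_{\rm loc}({\mathbb R}^n)$, so every quantity entering Lemma~\ref{lemma3.2} makes sense.

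By Lemma~\ref{lemma3.2} it now suffices to prove, uniformly over balls $B$, the bound
$$
\Big(r_B^{-\lambda}\int_B|(I-(I+r_B^2L)^{-1})\ell(x)|^2\,dx\Big)^{1/2}\le C\|\ell\|_{(F\dot{H}_{L,at}^{\lambda})^\ast}.
$$
I would evaluate the left-hand side by $L^2(B)$-duality, writing it as $r_B^{-\lambda/2}\sup|\langle(I-(I+r_B^2L)^{-1})\ell,\phi\rangle|$ over $\phi\in L^2(B)$ with $\|\phi\|_{L^2(B)}\le1$. Using that $I-(I+r_B^2L)^{-1}$ is self-adjoint together with the $\MM$--$\dMM$ pairing (again the second item after Definition~\ref{de41}), I transfer the operator onto $\phi$:
$$
\langle(I-(I+r_B^2L)^{-1})\ell,\phi\rangle=\langle\ell,(I-(I+r_B^2L)^{-1})\phi\rangle=\ell(m_\phi),\qquad m_\phi:=(I-(I+r_B^2L)^{-1})\phi.
$$
Since $I-(I+r_B^2L)^{-1}=r_B^2L(I+r_B^2L)^{-1}$, one may write $m_\phi=Lb_\phi$ with $b_\phi:=r_B^2(I+r_B^2L)^{-1}\phi\in{\mathcal D}(L)$.

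The crux is to verify that $c\,r_B^{-\lambda/2}m_\phi$ is a $(2,1,\lambda,\epsilon)$-molecule adapted to $B$ for an absolute constant $c$ and an $\epsilon$ that may be taken arbitrarily large. On $U_0(B)=B$ the $L^2$-contractivity of the resolvent gives $\|(r_B^2L)^kb_\phi\|_{L^2(B)}\le Cr_B^2$ for $k=0,1$, matching the molecular bound after the $r_B^{-\lambda/2}$ normalization. For $j\ge1$, since $U_j(B)$ lies at distance $\sim 2^jr_B$ from the support $B$ of $\phi$, I would invoke the off-diagonal $L^2$ decay of $(I+r_B^2L)^{-1}$ and of $r_B^2L(I+r_B^2L)^{-1}$: via the subordination identity $(I+r_B^2L)^{-1}=\int_0^\infty e^{-s}e^{-sr_B^2L}\,ds$ and the Gaussian bounds \eqref{e2.1}--\eqref{e2.2}, these operators enjoy decay faster than any power of $2^j$, which produces the required factor $2^{-j\epsilon}(2^jr_B)^{-\lambda/2}$. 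Granting this, $\|m_\phi\|_{F\dot{H}_{L,at}^{\lambda}}\le C\,r_B^{\lambda/2}$, hence $|\ell(m_\phi)|\le C\,r_B^{\lambda/2}\|\ell\|_{(F\dot{H}_{L,at}^{\lambda})^\ast}$; taking the supremum over $\phi$ yields the displayed estimate and thus $\|\ell\|_{\mathcal{L}_{L}^{2,\lambda}}\le C\|\ell\|_{(F\dot{H}_{L,at}^{\lambda})^\ast}$.

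Finally, the representation \eqref{ee4.1} is immediate from the identification in the first step: for $g$ a finite linear combination of $(2,1,\lambda,\epsilon)$-molecules, the action $\ell(g)$ of the functional on $g\in F\dot{H}_{L,at}^{\lambda}({\mathbb R}^n)$ and the dual pairing $\langle\ell,g\rangle$ between $\MM$ and $\dMM$ are computed term by term and coincide by the very definition of the extension, so linearity closes the argument. I expect the principal obstacle to be the off-diagonal molecular estimate of the third paragraph --- extracting from the Gaussian bound enough decay of the composed resolvents to manufacture a genuine $(2,1,\lambda,\epsilon)$-molecule with $\epsilon>\frac{n-\lambda}{2}$ --- together with the care required to justify the self-adjoint transfer of $I-(I+r_B^2L)^{-1}$ onto $\phi$ inside the ${\mathcal E}_\lambda$--$\MM$ duality.
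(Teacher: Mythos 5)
Your proposal is correct and follows essentially the same route as the paper: identify $\ell\in{\mathcal E}_\lambda$ via the boundedness of $\ell$ on $(2,1,\lambda,\epsilon)$-molecules (Lemma~\ref{th2.9}), then for each ball $B$ test $(I-(I+r_B^2L)^{-1})\ell$ against $\varphi\in L^2(B)$, transfer the self-adjoint operator onto $\varphi$, and recognize $r_B^{-\lambda/2}(I-(I+r_B^2L)^{-1})\varphi$ as a molecule, concluding via Lemma~\ref{lemma3.2}. The only difference is that you spell out the off-diagonal verification that $m_\varphi$ is a molecule (via subordination and the Gaussian bound), which the paper disposes of by citing the observation following Definition~\ref{de41}.
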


\begin{proof} By Theorem~\ref{th2.9},  we have that for any
$(2,1,\lambda,\epsilon)$-molecule $m$,
$$
\|m\|_{F\dot{H}_{L,at}^{\lambda}}\leq C\ \ \hbox{and\ so}\ \
|\ell(m)|\leq C\|\ell\|_{(F\dot{H}_{L,at}^{\lambda})^\ast}.
$$
By the discussion on Definition \ref{de41}, $\ell$ is
a bounded linear functional on $\mathcal{M}^{2,\lambda,\epsilon}(L)$ for any $\epsilon>0$. Thus,
$\ell\in{\mathcal{E}}_\lambda$ and (\ref{ee4.1}) holds.  Further, $(I-(I+r_B^2L)^{-1})\ell$ is well defined and belongs to
$L^2_{\rm loc}({\mathbb R}^n).$  Fix a ball $B$, and let $\varphi\in L^2(B)$, with
$\|\varphi\|_{L^2(B)}\leq 1$. As we observed before,
$$
{\widetilde m}:=r_{B}^{-\lambda/2}(I-(I+r_B^2L)^{-1})\varphi
$$
is (up to a multiplicative constant) a $(2,1,\lambda,\epsilon)$-molecule. Thus,

\begin{align*}
r_{B}^{-\lambda/2}|\langle(I-(I+r_B^2L)^{-1})\ell,\varphi\rangle|&=r_{B}^{-\lambda/2}|\langle\ell,(I-(I+r_B^2L)^{-1})\varphi\rangle|\\
&=|\langle \ell, {\widetilde m}\rangle|\\
&\leq C\|\ell\|_{(F\dot{H}_{L,at}^{\lambda})^\ast}.
\end{align*}

\noindent Taking the supremum over all such $\varphi$ supported in $B$, we
obtain
$$
 {1\over r_{B}^{\lambda}}\int_B|(I-(I+r_B^2L)^{-1})\ell(x)|^2dx \leq C
\|\ell\|^2_{(F\dot{H}_{L,at}^{\lambda})^\ast}.
$$
Finally, taking the supremum over all balls $B$ in ${\mathbb R}^n$, we arrive at
the conclusion of Theorem \ref{th3.4}.
\end{proof}

\begin{proof}[Proof of Theorem \ref{t11}] Combining Theorem \ref{th3.3}, Theorem \ref{th3.4} and Theorem \ref{th2.10}, we get
$$
({F}\dot{H}_{L}^{\lambda}({\mathbb R}^n))^*=({F}\dot{H}_{L,at}^{\lambda}({\mathbb R}^n))^*=\mathcal{L}_L^{2,\lambda}({\mathbb R}^n),
$$
thereby reaching the preduality stated in Theorem \ref{t11}.
\end{proof}

\bigskip

\end{document}